\newtheorem{thm}{Theorem}[section]
\newtheorem{remark}[thm]{Remark}
\newtheorem{lemma}[thm]{Lemma}
\newtheorem{prop}[thm]{Proposition}
\newtheorem{ex}[thm]{Example}
\newtheorem{defn}[thm]{Definition}
 \numberwithin{equation}{section}
\newcommand{\bb}[1]{\mathbb{#1}}
\newcommand{\cl}[1]{\mathcal{#1}}
\newcounter{egcounter}
\begin{document}
\title[nearly invariant brangesian subspaces]{nearly invariant brangesian subspaces}

\author{Arshad Khan}
\address{Department Of Mathematics\\
        Shiv Nadar Institution Of Eminence\\
        School of Natural Sciences\\
         Gautam Budh Nagar - 203207\\
         Uttar Pradesh, India}
\email{ak954@snu.edu.in}

\author{Sneh Lata}
\address{Department Of Mathematics\\
       Shiv Nadar Institution Of Eminence\\
         School of Natural Sciences,\\
         Gautam Budh Nagar - 203207\\
         Uttar Pradesh, India}
\email{sneh.lata@snu.edu.in}

\author{ Dinesh Singh}
\address{Centre For Lateral Innovation, Creativity and Knowledge\\
      SGT University\\
   Gurugram 122505\\
        Haryana, India}
\email{dineshsingh1@gmail.com}

\subjclass{Primary 47A15; Secondary 30H10, 47B32}

\keywords{de Branges spaces, nearly invariant subspaces, Hardy spaces, inner function, multiplication operator, reproducing kernel Hilbert spaces.}

\begin{abstract}
This article describes Hilbert spaces contractively contained in certain reproducing kernel Hilbert spaces of analytic functions on the open unit disc which are nearly invariant under division by an inner function. We extend Hitt's theorem on nearly invariant subspaces of the backward shift operator on $H^2(\bb D)$ as well as its many generalizations to the setting of de Branges spaces.
\end{abstract}
\maketitle

\section{introduction}
In this paper we study nearly invariant subspaces from a Brangesian point of view. A subspace $\cl M$ of the Hardy space 
$H^2(\bb D)$ is called nearly invariant under the backward shift operator $S^*$ on $H^2(\bb D)$ if $S^*(f)$ belongs to 
$\cl M$ whenever $f$ vanishes at zero. These subspaces first arose in the work of Hitt \cite{Hitt} while characterizing 
the shift invariant subspaces of the Hardy space of an annulus. The kernels of Toeplitz operators are particular examples of nearly $S^*$-invariant 
subspaces, and this special case of Hitt's theorem was independently established by Hayashi \cite{Hya} by developing ideas similar to those used by Hitt. 
Hitt called these subspaces ``weakly invariant" rather than ``nearly invariant". Sarason, \cite{Sar}, coined the term ``nearly invariant subspaces" and-more importantly-gave a new proof of Hitt’s theorem by utilizing ideas based on de Branges-Rovnyak spaces, \cite{deBR}. See also \cite{Sar1}. In doing so, Sarason engendered new ideas that gave rise to some very interesting papers such as \cite{ABBH, AFR, Che, Yak}. Since the time \cite{Hya}, \cite{Hitt} and particularly \cite{Sar} appeared nearly invariant subspaces have established themselves as an important area of research and they can be deemed to be a proper generalization of the concept of invariant subspaces. In addition, they connect with many diverse areas including with mathematical physics. See D. Vukoti$\acute{\rm c}$ (2011). [Review of the book The Hardy spaces of a slit domain, by A. Aleman, N. Feldman, and W. Ross]. MR2548414 (2011m:30095).

\begin{thm}{\bf (Hitt's theorem)}\label{hitt} Let $\cl M$ be a non-trivial nearly invariant subspace of $H^2(\bb D)$ under $S^*,$ and let $g$ be a function in 
$\cl M$ of unit norm that is orthogonal to $\cl M\cap zH^2(\bb D)$ and positive at the origin. Then there exists a $S^*$-invariant subspace 
$\cl N$ of $H^2(\bb D)$ such that $\cl M=g\cl N$ and $||gf||=||f||$ for all $f\in \cl N.$ 
\end{thm}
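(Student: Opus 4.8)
The plan is to construct $\cl N$ explicitly as the space of quotients $\{f/g:f\in\cl M\}$ and to verify, in turn, that these quotients all lie in $H^2(\bb D)$, that $\cl N$ is invariant under $S^*$, and that $f\mapsto gf$ is isometric on $\cl N$. Throughout, $S$ denotes the forward shift $h\mapsto zh$ on $H^2(\bb D)$ and $k^{\cl M}_\lambda$ the reproducing kernel of $\cl M$ at $\lambda\in\bb D$.

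\emph{Step 1 (the extremal function).} First, $g(0)>0$: if $g(0)=0$ then $g\in\cl M\cap zH^2$, which is orthogonal to $g$, forcing $g=0$ and contradicting $\|g\|=1$; positivity at the origin then gives $g(0)>0$. For every $f\in\cl M$ the function $f-\tfrac{f(0)}{g(0)}g$ lies in $\cl M$ and vanishes at $0$, hence lies in $\cl M\cap zH^2$ and is orthogonal to $g$; consequently $\langle f,g\rangle=f(0)/g(0)$ for all $f\in\cl M$. Equivalently $g(0)\,g=k^{\cl M}_0$, and $\cl M=\bb C g\oplus(\cl M\cap zH^2)$ orthogonally.

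\emph{Step 2 (quotients and the definition of $\cl N$).} Fix $f\in\cl M$. Iterating $f=\tfrac{f(0)}{g(0)}g+zS^*\!\big(f-\tfrac{f(0)}{g(0)}g\big)$ and invoking near invariance at each stage (the bracketed term lies in $\cl M\cap zH^2$, so $S^*$ returns it to $\cl M$) produces $f_0=f,f_1,f_2,\dots\in\cl M$ with $f_k=a_kg+zf_{k+1}$, where $a_k:=f_k(0)/g(0)$; hence $f=g\sum_{k<n}a_kz^k+z^nf_n$ for every $n$. As $zf_{k+1}=f_k-a_kg\in\cl M\cap zH^2$ is orthogonal to $g$, the Pythagorean identity yields $\|f_k\|^2=|a_k|^2+\|f_{k+1}\|^2$, so $\sum_k|a_k|^2\le\|f\|^2$ and $\sum_k a_kz^k$ converges in $H^2$. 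Since the Taylor coefficients of $z^nf_n$ of order $<n$ vanish and $\|z^nf_n\|\le\|f\|$, we have $z^nf_n\to0$ weakly, hence pointwise on $\bb D$; letting $n\to\infty$ in $f=g\sum_{k<n}a_kz^k+z^nf_n$ shows that $f/g$ extends to the element $\sum_k a_kz^k$ of $H^2$. Define $\cl N:=\{f/g:f\in\cl M\}$ with $\|f/g\|_{\cl N}:=\|f\|_{H^2}$; then $\cl M=g\cl N$, the operator $V\colon f\mapsto f/g$ is a unitary of $\cl M$ onto $\cl N$, and $\cl N\subseteq H^2$ with $\|h\|_{H^2}\le\|h\|_{\cl N}$ for every $h\in\cl N$. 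A short computation identifies the reproducing kernel of $\cl N$ at $\lambda$ as $k^{\cl M}_\lambda/\big(g\,\overline{g(\lambda)}\big)$; at $\lambda=0$, Step 1 makes this $\equiv 1$, so $1\in\cl N$, $\|1\|_{\cl N}=1$, and $\langle h,1\rangle_{\cl N}=h(0)$ for all $h\in\cl N$.

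\emph{Step 3 ($S^*$-invariance and a norm recursion).} If $h=f/g\in\cl N$, then $g\big(h-h(0)\big)=f-h(0)g\in\cl M$ vanishes at $0$, so near invariance gives $g\,S^*h=S^*\!\big(f-h(0)g\big)\in\cl M$, whence $S^*h=(g\,S^*h)/g\in\cl N$; thus $\cl N$ is $S^*$-invariant, and $h-h(0)=SS^*h\in\cl N$ as well. Since $h-h(0)$ vanishes at $0$ it is $\cl N$-orthogonal to $1$, so $\|h\|_{\cl N}^2=|h(0)|^2+\|h-h(0)\|_{\cl N}^2$; and since $g\big(h-h(0)\big)=S(g\,S^*h)$ with $S$ isometric on $H^2$, we have $\|h-h(0)\|_{\cl N}=\|g\,S^*h\|_{H^2}=\|S^*h\|_{\cl N}$. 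Hence $\|h\|_{\cl N}^2=|h(0)|^2+\|S^*h\|_{\cl N}^2$, and iterating, $\|h\|_{\cl N}^2=\sum_{k=0}^{n-1}|\widehat h(k)|^2+\|S^{*n}h\|_{\cl N}^2$ for all $n\ge1$.

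\emph{Step 4 (the main obstacle).} What remains is to show $\|S^{*n}h\|_{\cl N}\to0$ for every $h\in\cl N$ --- equivalently $\|f_n\|_{H^2}\to0$ for every $f\in\cl M$. Given this, the recursion of Step 3 forces $\|h\|_{\cl N}=\|h\|_{H^2}$, i.e.\ $\|gf\|=\|f\|$ for all $f\in\cl N$; thus $\cl N$ is isometrically contained in $H^2$, and an $S^*$-invariant closed subspace of $H^2$ whose reproducing kernel at $0$ is $\equiv 1$ is a model space $K_\theta$ with $\theta(0)=0$ (or $\cl N=H^2$), which completes the proof. I expect this step to be the genuine difficulty: the recursions of Steps 2--3 are self-consistent and do not of themselves force the limit to vanish, and the equality $\|gh\|_{H^2}=\|h\|_{H^2}$ can fail for arbitrary $g,h\in H^2$ with $gh\in H^2$, so one must exploit that $g$ is the \emph{extremal} function of the \emph{closed} subspace $\cl M$. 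In the spirit of Sarason's proof I would argue through de Branges--Rovnyak space theory: the identity $\|S^*h\|_{\cl N}^2=\|h\|_{\cl N}^2-|h(0)|^2$ together with the contractive inclusion $\cl N\subseteq H^2$ places $\cl N$ within the de Branges--Rovnyak framework, and the extremality of $g$ --- equivalently, the rigidity of $g^2$ in $H^1$ --- forces the inclusion $\cl N\hookrightarrow H^2$ to be isometric, so that $\cl N$ is a model space. An equivalent formulation of the missing point is that the contraction $X:=V^{-1}S^*V$ on $\cl M$, which acts by $Xf=S^*\!\big(f-\langle f,g\rangle g\big)$ and satisfies $\|Xf\|^2=\|f\|^2-|\langle f,g\rangle|^2$, is of class $C_{0\cdot}$.
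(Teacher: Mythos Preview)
The paper does not prove Theorem \ref{hitt}; it is quoted from the literature (Hitt \cite{Hitt}, Sarason \cite{Sar}) as background. The paper's own contribution in this direction is Theorem \ref{MTH}, whose proof, when specialized to a closed subspace $\cl M\subseteq H^2(\bb D)$ with the $H^2$-norm, reproduces essentially your Steps~1--3 (the decomposition $h=\sum_{k<n}a_kz^kg+z^nf_n$, the orthogonality, and the inequality $\|f\|_2\le\|h\|$). But as the paper stresses in Remark \ref{MRH} and Examples \ref{EH1}--\ref{EH2}, that argument by itself yields only the inequality and the $S^*$-invariance of the linear span $\cl N$, \emph{not} the isometry $\|gf\|=\|f\|$ nor the closedness of $\cl N$. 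So there is nothing in the paper to compare your Step~4 against.

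On the substance of your proposal: Steps~1--3 are correct and standard. Your Step~4 is honestly labeled as incomplete, and it is indeed the entire content of Hitt's theorem beyond what the paper's methods give. Two remarks on your sketch there. First, the equivalence you invoke between extremality of $g$ and ``rigidity of $g^2$ in $H^1$'' is Hayashi's characterization of extremal functions of \emph{Toeplitz kernels}, not of general nearly $S^*$-invariant subspaces; it is not the tool Sarason uses for Hitt's theorem and does not by itself close the gap. Second, Sarason's actual route is to show that the contractively contained, $S^*$-invariant Hilbert space $\cl N$ you have built (with $\|S^*h\|_{\cl N}^2=\|h\|_{\cl N}^2-|h(0)|^2$) must be a de~Branges--Rovnyak space $\cl H(b)$, and then to use closedness of $\cl M$ in $H^2$ to force $b$ to be inner (so $\cl H(b)=K_b$ isometrically). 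That last implication --- closedness of $\cl M$ $\Rightarrow$ $b$ inner --- is the genuine work, and your sketch does not supply it. As written, the proposal stops exactly where the difficulty begins.
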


In 2010, Chalender, Chevrot, and Partington \cite{CCP} generalized Hitt's result to the backward shift operator on a vector-valued Hardy 
space. A few years later, the first and third author from \cite{CCP}, in collaboration with Gallardo-Gutierrez, introduced and described in \cite{CGP} 
nearly invariant subspaces {\em with finite defect} for the backward shift operator on $H^2(\bb D).$ This work was further extended to the 
vector-valued case by Chattopadhyay, Das, and Pradhan in \cite{CDP}.
 
Simultaneously, Erard \cite{Era} in 2004 introduced the notion of nearly invariant subspaces in the vastly general situation of multiplication operators that are bounded 
below on reproducing kernel Hilbert spaces. He first deduced a factorization theorem in the general setting and later used 
it to describe nearly invariant subspaces of the backward shift on general reproducing kernel Hilbert spaces of analytic functions on the open 
unit disc $\bb D$ on which the operator of multiplication with $z$ is well-defined and bounded below. As a particular case, his result also described 
nearly invariant subspaces of the backward shift on $H^2(\bb D).$ His description turns out to be the same as Hitt's. However, since he was working in a much more general setting, his method could not capture two crucial pieces of information about the representation, namely, the norm equality (as it appears in Hitt's theorem) and the closedness of the backward shift invariant subspaces that appear in the representation. In 2021, Liang and Partington used Erard's factorization theorem (\cite[Theorem 3.2]{Era}) to describe nearly invariant subspaces of Dirichlet-type spaces $\cl D_\alpha, \ -1\le\alpha \le 1$ with respect to the operator of multiplication with a finite Blaschke factor. This work of 
Liang and Partington has been extended to the finite defect setting in 2022 by Chattopadhyay and Das in \cite{CD}. 

 Let $\cl H_1$ and $\cl H_2$ be two Hilbert spaces with norms $||\cdot||_1$ and $||\cdot||_2,$ respectively. We say $\cl H_1$ is 
contractively contained in $\cl H_2$ if $\cl H_1$ is a vector subspace (not necessarily closed) of $\cl H_2$ and the inclusion map is a 
contraction, that is, $||h||_2\le ||h||_1$ for all $h\in \cl H_1. $ In this paper we shall investigate the above-mentioned avenues of research associated with nearly invariant subspaces for contractively contained Hilbert spaces. 

The organization of the paper is as follows. Section \ref{def} contains definitions and terminologies that will be used throughout the paper. In Section \ref{Hardy}, we describe Hilbert spaces that are contractively contained in the Hardy space 
$H^2(\bb D, \bb C^n)$ and which are nearly invariant under the backward shift operator on $H^2(\bb D,  \bb C^n ).$ Our result (Theorem \ref{MTH}) is, in a sense, the best possible generalization-in the setting of de Branges spaces-of Hitt’s theorem (Theorem \ref{hitt}) and also its vector-valued generalization by Chalendar et al. (Theorem \ref{CCP}). This is so, since the represenations obtained in both these theorems can be easily derived from our version once we assume that our general de Branges space is the special case of the scalar valued Hardy space of Hitt or the $n$-dimensional valued Hardy space of Chalendar et al. At the same time, we show through specific counterexamples that our characterization per se in the general setting of the contractively contained de Branges space cannot be improved. In other words we show that in the conclusion of Theorem \ref{MTH}, our inequality between the de Branges space and the Hardy space cannot be improved to an equality (Example \ref{EH1}) nor can we conclude in the general case that the backward shift invariant subspace in our description is closed, see (Example \ref{EH2}). Afterwards, in Section 
\ref{rkhs} (Theorem \ref{MTRKHS}), we extend a work of Erard from \cite[Theorem 5.1]{Era} (stated here as Theorem \ref{Era}) to the case of contractively contained Hilbert spaces. Our Theorem \ref{MTRKHS} is in fact also an extension of Liang and Partington's result (\cite[Theorem 3.4]{LP}) that used Erad's result to describe subspaces of the Dirichlet-type spaces $\cl D_{\alpha} \ (0\le \alpha \le 1)$ which are nearly invariant under ``division by a finite Blaschke factor". Lastly, in Section \ref{finite}, we extend our study from Section \ref{rkhs} to the finite defect setting. Theorem \ref{RKHS-defect} is an extension of Chattopadhay and Das' result \cite[Theorem 3.9]{CD} to multiplication with inner function on general reproducing kernel Hilbert spaces which in turn is a generalization of Liang and Partington's above-mentioned work to the finite defect setting.

\section{Terminologies and Definitions}\label{def}
Let $\bb D$ be the open unit disc in the complex plane $\bb C.$ For a given Hilbert space $\cl K,$ let $H^2(\bb D, \cl K)$ denote the familiar Hardy space of $\cl K$-valued analytic functions on $\bb D.$ Recall that 
$$
H^2(\bb D, \cl K) = \left \{\sum_{m=0}^\infty A_m z^m :  A_m\in \cl K,  \ \sum_{m=0}^\infty ||A_m||^2_{\cl K}<\infty\right\}
$$
and it is a Hilbert space with respect to the norm 
$||f||^2_{2,\cl K} = \sum_{m=0}^\infty ||A_m||^2_{\cl K},$ where $f(z)=\sum_{m=0}^\infty A_mz^m$ belongs to $H^2(\bb D, \cl K).$ The Hardy space $H^2(\bb D,\bb C)$ 
is denoted simply as $H^2(\bb D)$. Note that if $\{x_i:i\in I\}$ is an orthonormal basis of $\cl K,$ then $H^2(\bb D,\cl K)$ can be identified (under an isometric isomorphism) with $\ell^2$-direct sum of $I$ copies of $H^2(\bb D).$ Thus, each 
$f\in H^2(\bb D, \cl K)$ can be identified with an $I$-tuple $(f_i)_{i\in I}$, where each $f_i\in H^2(\bb D)$ and $||f||_{2,\cl K}^2=\sum_{i\in i} ||f_i||_{2,\bb C}^2.$ Henceforth, for notational convenience, we shall not mention $\cl K$ in the norm $||\cdot||_{2,\cl K}$; instead, we shall write it as $||\cdot||_2$.  

The forward shift or simply the shift operator $S$ on 
$H^2(\bb D, \cl K)$ is defined as $Sf(z)=zf(z)$ and it's adjoint $S^*$, known as the backward shift operator, is given by 
$$
S^*f(z)=\frac{f(z)-f(0)}{z}
$$
for $f\in H^2(\bb D, \cl K).$ 

In the introduction, we have used the term subspace to refer to a closed subspace, and we shall keep following the same terminology throughout the paper. 
But very often, in what follows, we shall encounter subspaces that are not necessarily closed; to make them stand out, we shall refer to them as vector subspaces.

\begin{defn} A vector subspace $\cl M$ of $H^2(\bb D, \cl K)$ is said to be nearly invariant under the backward shift $S^*$ if $S^*f\in \cl M$ whenever $f\in \cl M$ 
and $f(0)=0.$
\end{defn}

In light of the fact that $S^*$ is a left inverse of $S$, the definition of nearly invariant under $S^*$ is equivalent to saying $f\in \cl M$ 
whenever $Sf\in \cl M$. This motivated Erard in \cite{Era} to extend the notion of nearly invariant to the setting of bounded below multiplication operators on 
reproducing kernel Hilbert spaces. Before giving Erard's version, first, we provide the following relevant definitions.

\begin{defn} A set of complex-valued functions on a set $X$ is called a reproducing kernel Hilbert space (RKHS) if
\begin{enumerate}
\item $\cl H$ is a vector space with respect to pointwise addition and scalar multiplication;
\item $\cl H$ has a norm with which it is a Hilbert space;
\item  for each fixed $x\in X,$ the point evaluation map $f\mapsto f(x)$ is continuous on $\cl H.$
\end{enumerate}
\end{defn}

Suppose $\cl H$ is an RKHS on a set $X$. Further, suppose $\phi$ is a function on $X$ which multiplies $\cl H$ into itself. Let $M_\phi$ denote this multiplication map. Then it can be seen that $M_\phi$ is linear and bounded. The following is Erard's analouge of nearly invariant in the context of a multiplication operator on an RKHS.  

\begin{defn} Let $\cl H$ be an RKHS on a set $X$ and suppose $M_\phi$ is a multiplication operator on $\cl H$ which is bounded below. Then a vector subspace $\cl M$ of $\cl H$ is 
said to be nearly invariant under division by $\phi$ if $\phi f\in \cl M$ implies $f\in \cl M.$
\end{defn} 

Note that when $M_\phi$ is bounded below on $\cl H,$ then nearly invariant under a left inverse of $\cl M_\phi,$ as we discussed above, would mean $f\in \cl M$ whenever $\phi f\in \cl M$ which clearly justifies Erard's choice for the terminology ``nearly invariant under division by $\phi$". Moreover, the advantage of this terminology is that it brings to light the essence of the definition for multiplication operators. Also, the absence of an explicit mention of a left inverse makes the definition much simpler to follow. 

The following are straightforward but yet important observations.

\begin{lemma}\label{ni1} Let $\cl W$ be an open subset of the complex plane, $\cl H$ be an RKHS on $\cl W$ consisting of analytic functions on $\cl W,$ and let $\phi$ be an analytic function on 
$\cl W$ that multiplies $\cl H$ into itself. If $\phi$ vanishes at a point in $\cl W,$ then the only vector subspace of $\cl H$ that is nearly invariant under division by $\phi$ and contained in $\phi\cl H$ is the zero subspace.     
\end{lemma}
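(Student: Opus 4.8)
The plan is to iterate the defining property ``nearly invariant under division by $\phi$'' against the inclusion $\cl M\subseteq\phi\cl H$, thereby forcing every $f\in\cl M$ to vanish to infinite order at a zero of $\phi$, and then to invoke the identity theorem for analytic functions.

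Concretely, fix a point $w_0\in\cl W$ with $\phi(w_0)=0$, and let $\cl M$ be a vector subspace of $\cl H$ that is nearly invariant under division by $\phi$ and satisfies $\cl M\subseteq\phi\cl H$. Take any $f\in\cl M$ and put $g_0=f$. I would build recursively a sequence $(g_n)_{n\ge 0}$ in $\cl M$ with $g_{n-1}=\phi g_n$ for every $n\ge 1$: if $g_{n-1}\in\cl M\subseteq\phi\cl H$, then $g_{n-1}=\phi g_n$ for some $g_n\in\cl H$, and since $\phi g_n=g_{n-1}\in\cl M$, near invariance under division by $\phi$ yields $g_n\in\cl M$, which closes the recursion. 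Telescoping gives $f=\phi^{n}g_n$ with $g_n\in\cl H$ for every $n\ge 1$. Here the inclusion $\cl M\subseteq\phi\cl H$ is precisely what allows the hypothesis to be re-entered at each stage.

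Next I would compare orders of vanishing at $w_0$. Since $\cl H$ consists of functions analytic on $\cl W$, each $g_n$ has a well-defined finite, nonnegative order of vanishing at $w_0$. Let $\cl W_0$ be the connected component of $\cl W$ containing $w_0$. If $\phi$ is not identically zero on $\cl W_0$, then, because $\phi(w_0)=0$, it vanishes at $w_0$ to some finite order $m\ge 1$, so $\phi^{n}g_n$ vanishes at $w_0$ to order at least $nm\ge n$; as $f=\phi^{n}g_n$ for all $n$, the function $f$ vanishes to infinite order at $w_0$, and hence $f\equiv 0$ on $\cl W_0$ by the identity theorem. (If instead $\phi\equiv 0$ on $\cl W_0$, then already $f=\phi g_1\equiv 0$ on $\cl W_0$.) When $\cl W$ is connected --- which is the case in the applications in this paper, $\cl W$ being the disc $\bb D$ --- this forces $f\equiv 0$, and therefore $\cl M=\{0\}$.

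The argument is essentially routine, so I do not anticipate a genuine obstacle; the only point demanding a little care is the bookkeeping in the recursion --- in particular recording that $\cl M\subseteq\phi\cl H$ is exactly the ingredient needed to iterate, and that the hypothesis ``$\phi$ vanishes somewhere in $\cl W$'' is what prevents $\phi^{n}$ from staying bounded away from $0$ near $w_0$ and thus makes the comparison of vanishing orders bite.
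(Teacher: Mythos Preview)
Your proof is correct and follows essentially the same approach as the paper's: iterate the containment $\cl M\subseteq\phi\cl H$ together with near invariance to write every element as $\phi^{n}g_n$, then use the zero of $\phi$ to force infinite-order vanishing and invoke the identity theorem. You are in fact slightly more careful than the paper, which tacitly assumes connectedness of $\cl W$ when passing from ``zero of every order at $z_0$'' to ``$h=0$''; your remark that the applications have $\cl W=\bb D$ is exactly the missing justification.
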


\begin{proof} Suppose $\mathcal{M}$ is a vector subspace of $\cl H$ that is nearly invariant under division by $\phi$ and it is contained in $\phi \mathcal{H}$. Then for $h \in \mathcal{M}$, $h=\phi f$ for some $ f \in \mathcal{H}$. 
Since $\mathcal{M}$ is nearly invariant under division by $\phi$, therefore $f \in \mathcal{M}$.  Again, using the fact that $\cl M$ is contained in $\phi \cl H$ and it is nearly invariant under division by $\phi$, we conclude $f=\phi f_1$ for some $f_1 \in \cl M.$ Then $h=\phi^{2}f_1$. Continuing in the similar fashion, we obtain that for each $n, \ h=\phi^{n+1}f_n$, for some $f_n \in \cl M$. Now since $\phi$ has a zero in $\cl W,$ say at $z_0$, we deduce that the analytic function $h$ has a zero at $z_0$ of every order. This implies that $h=0.$ Hence $\mathcal{M}=\{0\};$ this completes the proof.    
\end{proof}

\begin{lemma}
Let $\mathcal{M}$ be a non-zero Hilbert space contractively contained as a vector subspace in $\mathcal{H}.$ If $\cl R$ is closed in $\cl H,$ then $\cl M\cap \cl R$ is closed in $\cl M.$   
\end{lemma}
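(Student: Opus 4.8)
The plan is to check closedness sequentially, using the single structural fact available, namely that the inclusion $\cl M\hookrightarrow\cl H$ is contractive, hence continuous. First I would fix an arbitrary sequence $(h_n)$ in $\cl M\cap\cl R$ that converges, with respect to the norm of $\cl M$, to some $h\in\cl M$. Since $\cl M$ is a Hilbert space and $\cl M\cap\cl R$ is manifestly a vector subspace of $\cl M$, proving the lemma amounts to showing $h\in\cl M\cap\cl R$; as $h\in\cl M$ is given, the only thing left to verify is that $h\in\cl R$.

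Next I would transfer the convergence to $\cl H$ via the contractive containment: for each $n$ we have $\|h_n-h\|_{\cl H}\le\|h_n-h\|_{\cl M}\to 0$, so $h_n\to h$ in the norm of $\cl H$ as well. Each $h_n$ lies in $\cl R$, and $\cl R$ is closed in $\cl H$; therefore the $\cl H$-limit of $(h_n)$ lies in $\cl R$. By uniqueness of limits in $\cl H$ that limit is exactly $h$, so $h\in\cl R$, whence $h\in\cl M\cap\cl R$. This shows $\cl M\cap\cl R$ is closed in $\cl M$.

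I do not expect any real obstacle here; the only point that needs a moment's attention is keeping the two norms carefully distinguished and observing that a sequence converging in $\cl M$ has the same limit when viewed in $\cl H$, which is precisely what the contractivity of the inclusion guarantees. (Note that the weaker hypothesis of merely contractive, rather than isometric, containment is reflected in the weaker conclusion: one obtains closedness in $\cl M$, not necessarily in $\cl H$. The non-triviality of $\cl M$ is not used and is recorded only to match the settings in which the lemma will be invoked.)
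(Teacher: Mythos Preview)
Your argument is correct and is essentially identical to the paper's own proof: take a sequence in $\cl M\cap\cl R$ converging in $\cl M$, use the contractive inclusion to pass the convergence to $\cl H$, and conclude via the closedness of $\cl R$ in $\cl H$. Your added remarks on uniqueness of limits and on the non-use of the non-triviality hypothesis are apt but go slightly beyond what the paper records.
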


\begin{proof} Let $\{h_n\}_{n=0}^{\infty}$ be a sequence in $\mathcal{M}\cap \mathcal{R}$ that converges to 
$h$ in $\mathcal{M}$. Since $\mathcal{M}$ is contractively contained in $\mathcal{H}$, therefore $\{h_n\}$ converges to $h$ in $\cl H.$ But each 
$h_n\in \cl R$ and $\cl R$ is closed in $\cl H.$ Therefore, $h \in  \mathcal{R}$ which implies that 
$h \in \mathcal{M}\cap\mathcal{R}$. Thus   $\cl M\cap \cl R$ is closed in $\cl M.$ 
\end{proof}

We end this section by recalling a few more terminologies. If $\phi$ is a bounded analytic function on $\bb D$, then it multiplies $H^2(\bb D)$ into itself, and 
in this case, the multiplication operator $M_{\phi}$ is a particular example of a Toeplitz operator which is generally denoted as $T_\phi$. Further, a bounded analytic function on $\bb D$ is said to be an inner function if $\lim\limits_{r\to 1^{-}}|\phi(re^{it})|=1$ a.e.. If $\phi$ is an inner function on $\bb D$ and 
$\phi(0)=0,$ then the composition operator, denoted as $C_\phi$, is an isometry on $H^2(\bb D).$ Suppose $\cl K$ is a Hilbert space with an orthonormal basis indexed by a set $I.$ Then direct sum  of $T_\phi$ and $C_\phi$ on $H^2(\bb D, \cl K)$(identified as $\ell^2$-direct sum of $I$-copies of $H^2(\bb D)$ are again bounded operator which we shall denote again by $T_\phi$ and $C_\phi$. 

\section{nearly invariant brangesian subspaces for the backward shift on vector-valued Hardy spaces}\label{Hardy}
In \cite{CCP}, Chalendar, Chevrot, and Partington extended Hitt's theorem (Theorem \ref{hitt}) to the vector-valued setting. They described subspaces of $H^2(\bb D, \bb C^n)$ that are nearly invariant under the backward shift operator $S^*$ on $H^2(\bb D, \bb C^n)$. In this section, we investigate their result in the de Branges setting. We describe Hilbert spaces contractively contained in $H^2(\bb D, \bb C^n)$ and nearly invariant under $S^*.$ It is crucial to note that we do not assume these vector subspaces to be closed in the Hardy space.  

Before presenting their description, we explain some notations essential to understanding their result and which will also be used throughout this section. Suppose $g_1, \dots, g_m$ be 
$\bb C^n$-valued 
functions on $\bb D.$ Let $G$ denote $n\times m$ matrix-valued function that maps $z\in \bb D$ to $n\times m$ matrix with column vectors $g_1(z), \dots, g_m(z).$ Now suppose $f$ is a 
$\bb C^m$-valued function on $\bb D$. Clearly, we can write 
$f=(f_1,\dots, f_m)$, where $f_1, \dots, f_m$ are scalar-valued functions on $\bb D$. Then for each $z\in \bb D,$ the matrix multiplication $G(z)f(z) =(g_1(z) \cdots g_m(z)) \begin{pmatrix} f_1(z) \\ \vdots \\ f_m(z)\end{pmatrix}=\sum_{i=1}^m f_i(z)g_i(z)$ is well-defined. We shall use $Gf$ to denote the function $z\mapsto G(z)f(z)=\sum_{i=1}^mf_i(z)g_i(z)$.  Clearly, if each $g_i$ and $f$ are analytic on $\bb D$, then $Gf$ is also analytic on $\bb D$. 

\begin{thm}\label{CCP}(\cite[Theorem 4.4]{CCP}) Let $\cl F$ be a nearly $S^*$-invariant subspace of $H^2(\bb D, \bb C^n)$ and let $\{g_1, \dots, g_r\}$ be an orthonormal basis of $\cl M\ominus (\cl M\cap zH^2(\bb D, \bb C^n))$. Let $G$ be the $n\times r$ matrix-valued function with columns $g_1, \dots, g_r$. Then there exists an isometric mapping 
$$
\cl J: \cl F \to {\cl F}^{'} \ {\rm given \ by} \ Gf\mapsto f, 
$$
where ${\cl F}^{'}:=\{f\in H^2(\bb D, \bb C^r): \exists \ h\in \cl F, \ h=Gf\}.$ Moreover, $\cl F^{'}$ is subspace of $H^2(\bb D, \bb C^r)$ that is $S^*$-invariant.
\end{thm}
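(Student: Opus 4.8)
The plan is to build the map $\cl J$ explicitly by a Hitt--Sarason type iteration inside $\cl F$, read off from it both the representation $\cl F=G\cl F'$ and the contractivity of $\cl J$, then check that $\cl F'$ is $S^*$-invariant, and finally upgrade $\cl J$ from a contraction to an isometry; the last step automatically yields the closedness of $\cl F'$ as well.

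Two preliminary observations set things up. Write $\cl F_0:=\cl F\ominus\big(\cl F\cap zH^2(\bb D,\bb C^n)\big)$ and let $P_0$ be the orthogonal projection onto $\cl F_0$, so that $P_0w=\sum_{i=1}^r\langle w,g_i\rangle g_i$ for $w\in\cl F$. First, $g_1(0),\dots,g_r(0)$ are linearly independent in $\bb C^n$: a relation $\sum_ic_ig_i(0)=0$ would put $\sum_ic_ig_i$ in $\cl F\cap zH^2(\bb D,\bb C^n)$ and in $\cl F_0$ at once, forcing it to vanish; in particular $r\le n$. Second, $M_G$ is injective on all of $H^2(\bb D,\bb C^r)$: if $Gf\equiv 0$ then $G(0)f(0)=0$ gives $f(0)=0$, so $f=zf_1$ with $Gf_1\equiv 0$, and repeating shows $f$ vanishes to infinite order at $0$. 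This injectivity is what makes the prescription ``$\cl J\colon Gf\mapsto f$'' meaningful.

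For $h\in\cl F$ I would iterate: put $w_0=h$, $u_j=P_0w_j=G\hat c_j$ with $\hat c_j:=\big(\langle w_j,g_1\rangle,\dots,\langle w_j,g_r\rangle\big)\in\bb C^r$, and $w_{j+1}=S^*(w_j-u_j)$. Since $w_j-u_j\in\cl F\cap zH^2(\bb D,\bb C^n)$ vanishes at $0$, near invariance of $\cl F$ gives $w_{j+1}\in\cl F$ inductively, and $w_j=u_j+zw_{j+1}$ is the orthogonal splitting of $w_j$ along $\cl F_0\oplus\big(\cl F\cap zH^2(\bb D,\bb C^n)\big)$, so $\|w_j\|_2^2=\|\hat c_j\|_{\bb C^r}^2+\|w_{j+1}\|_2^2$. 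Telescoping gives, for every $N$,
\begin{gather*}
h=Gf_N+z^{N+1}w_{N+1},\qquad f_N:=\sum_{j=0}^{N}z^j\hat c_j,\\
\|h\|_2^2=\sum_{j=0}^{N}\|\hat c_j\|_{\bb C^r}^2+\|w_{N+1}\|_2^2 .
\end{gather*}
Hence $\sum_{j\ge 0}\|\hat c_j\|^2\le\|h\|_2^2$, so $f:=\sum_{j\ge 0}z^j\hat c_j\in H^2(\bb D,\bb C^r)$ with $\|f\|_2\le\|h\|_2$; and since $\|w_{N+1}\|_2\le\|h\|_2$ stays bounded while point evaluation on $H^2(\bb D,\bb C^n)$ is continuous, evaluating the first identity at a fixed $z\in\bb D$ and letting $N\to\infty$ forces $h=Gf$ pointwise. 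Thus every $h\in\cl F$ is of the form $Gf$ with $f\in H^2(\bb D,\bb C^r)$ and $Gf=h\in\cl F$, i.e.\ $f\in\cl F'$; setting $\cl Jh:=f$ produces a linear contraction $\cl F\to\cl F'$, well defined by the injectivity of $M_G$, and onto $\cl F'$ (if $f\in\cl F'$ then $\cl J(Gf)=f$). The $S^*$-invariance of $\cl F'$ is then immediate: for $f\in\cl F'$ write $f=f(0)+z\,S^*f$, so $zG(S^*f)=Gf-\sum_if_i(0)g_i$ is a difference of two members of $\cl F$ (because $\sum_if_i(0)g_i\in\cl F_0$), vanishes at $0$, and hence $G(S^*f)=S^*\big(zG(S^*f)\big)\in\cl F$, i.e.\ $S^*f\in\cl F'$.

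The hard part will be upgrading $\cl J$ to an \emph{isometry}, i.e.\ proving $\|w_{N+1}\|_2\to 0$ in the telescoped identity, which then gives $\|h\|_2^2=\sum_{j\ge 0}\|\hat c_j\|^2=\|\cl Jh\|_2^2$. By the computation that produced $h=Gf$ one also has $w_N=G(S^{*N}f)$ pointwise, while $\|S^{*N}f\|_2\to 0$; so the claim is precisely that no norm escapes along the iteration --- exactly the information (the norm equality in Hitt's theorem) that Erard's general-RKHS method could not capture, and the step I expect to be the genuine obstacle, since the summands $z^ju_j$ appearing above need not be mutually orthogonal. The plan for it is to follow Sarason's de Branges--Rovnyak argument, in its vector-valued form from \cite{CCP}: one extracts from the extremal data an operator-valued Schur function $B$ attached to $G$, identifies $\cl F'$ --- carrying the norm transported from $\cl F$ through $M_G$ --- with the associated de Branges--Rovnyak space $\cl H(B)$, and reads the equality $\|Gf\|_2=\|f\|_2$ off the defining contractive-difference property of $\cl H(B)$. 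Once the isometry is established, $\cl F'$ is complete, being isometrically isomorphic to the complete space $\cl F$ via $\cl J^{-1}=M_G$, hence closed in $H^2(\bb D,\bb C^r)$; this completes the proof.
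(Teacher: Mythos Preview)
The paper does not give its own proof of this statement; it is quoted verbatim from \cite[Theorem~4.4]{CCP} and serves only as background for the paper's Theorem~\ref{MTH}. There is therefore no proof in the paper to compare your proposal against directly. The closest thing is the proof of Theorem~\ref{MTH}, and on the parts that overlap your proposal and that proof coincide: the paper runs exactly your Hitt--Sarason iteration (your $w_j$, $u_j$, $\hat c_j$ are its $R^jh$, $QR^jh$, $A_j$), obtains $h=Gf$ with $\|f\|_2\le\|h\|_{\cl M}$, and proves injectivity of $M_G$ and $S^*$-invariance of $\cl N$ by the same elementary arguments you give. So the representation, the contractivity of $\cl J$, and the $S^*$-invariance of $\cl F'$ in your write-up are correct and match the paper's method when specialized to a genuine closed subspace with the Hardy norm.

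What separates Theorem~\ref{CCP} from Theorem~\ref{MTH} is precisely the isometry of $\cl J$ (equivalently $\|w_{N+1}\|_2\to 0$) and the closedness of $\cl F'$, and the paper is explicit that its iteration cannot deliver these: Remark~\ref{MRH} says so, the discussion around Theorem~\ref{Era} repeats it, and Examples~\ref{EH1} and~\ref{EH2} show both conclusions fail in the contractive setting. You diagnose this correctly and point to the Sarason/de~Branges--Rovnyak argument of \cite{CCP} for the isometry, but you only \emph{name} that argument; the extraction of the operator-valued Schur function $B$ from $G$, the identification of the transported norm on $\cl F'$ with the $\cl H(B)$-norm, and the passage from $\cl H(B)$ back to the $H^2$-norm are not carried out. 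Your observation that $w_N=G(S^{*N}f)$ with $S^{*N}f\to 0$ in $H^2$ is helpful framing, but since $M_G$ is not a priori bounded on $H^2(\bb D,\bb C^r)$ it does not close the gap by itself. In short: the executed portion of your proof is correct and mirrors the paper's own methods, but the step that distinguishes the cited theorem from what the paper actually proves remains a plan rather than a proof, and that plan amounts to rereading the very reference being cited.
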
 

We shall now present the main result (Theorem \ref{MTH}) of this section. It is an analogue of Theorem \ref{CCP} for the de Branges setting. We start with the following preliminary observation.

\begin{prop}\label{ni2} Let $\mathcal{M}$ be a non-zero Hilbert space contractively contained in the Hardy space $H^2(\bb D, \bb C^n).$ Suppose $\cl M$ is nearly invariant under the backward shift on $H^2(\bb D, \bb C^n).$ Then $\cl M\ominus (\cl M\cap zH^2(\bb D, \bb C^n))$ is non-zero and its dimension can be at most $n$. 
\end{prop}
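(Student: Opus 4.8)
The plan is to prove the two assertions separately: first that $\cl M \ominus (\cl M \cap zH^2(\bb D,\bb C^n))$ is non-zero, and then that its dimension is at most $n$.

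For the first assertion, suppose toward a contradiction that $\cl M \ominus (\cl M \cap zH^2(\bb D,\bb C^n)) = \{0\}$, i.e. $\cl M = \cl M \cap zH^2(\bb D,\bb C^n)$ as Hilbert spaces. This says every $h \in \cl M$ satisfies $h(0) = 0$, so $h = z\,S^*h$; since $\cl M$ is nearly invariant under $S^*$, we get $S^*h \in \cl M$. Iterating, $S^{*k}h \in \cl M$ for all $k$, and since $\cl M \subseteq zH^2$ each iterate again vanishes at $0$; this forces $h$ to vanish to infinite order at $0$, hence $h \equiv 0$, contradicting $\cl M \neq \{0\}$. (This is precisely the mechanism of Lemma~\ref{ni1} applied to $\phi(z) = z$, once one notes that $\cl M \subseteq z H^2$ together with near-invariance is exactly the hypothesis of that lemma; I would either cite Lemma~\ref{ni1} directly or reproduce the two-line argument.) The one point that needs care here is that $\cl M$ carries its own Hilbert-space norm, not the $H^2$ norm — but since $\cl M$ is contractively contained in $H^2(\bb D,\bb C^n)$, convergence in $\cl M$ implies convergence in $H^2$, so point evaluations are still continuous on $\cl M$ and the "infinite order vanishing $\Rightarrow$ zero" step is valid.

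For the dimension bound, the key observation is the following "value at zero" map. Let $E_0 : \cl M \to \bb C^n$ be the evaluation $h \mapsto h(0)$; this is bounded on $\cl M$ because $\cl M$ is contractively contained in $H^2(\bb D,\bb C^n)$ and $E_0$ is bounded on $H^2(\bb D,\bb C^n)$. I claim $E_0$ is injective on $\cl M \ominus (\cl M \cap zH^2(\bb D,\bb C^n))$. Indeed, if $g$ lies in this orthogonal complement and $g(0) = 0$, then $g \in \cl M \cap zH^2(\bb D,\bb C^n)$, so $g$ is orthogonal to itself in $\cl M$, whence $g = 0$. Therefore $E_0$ restricted to $\cl M \ominus (\cl M \cap zH^2(\bb D,\bb C^n))$ is an injective linear map into the $n$-dimensional space $\bb C^n$, which immediately gives $\dim\bigl(\cl M \ominus (\cl M \cap zH^2(\bb D,\bb C^n))\bigr) \le n$.

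The main obstacle — really the only subtle point — is keeping straight which norm is in play: the orthogonal complement $\cl M \ominus (\cl M \cap zH^2(\bb D,\bb C^n))$ is taken inside $\cl M$ with its own inner product, and $\cl M \cap zH^2(\bb D,\bb C^n)$ is genuinely closed in $\cl M$ (this is exactly the second lemma of Section~\ref{def}, with $\cl R = zH^2(\bb D,\bb C^n)$), so the orthogonal complement is well-defined. Once that is in place, both arguments are short; I would present the non-triviality via Lemma~\ref{ni1} and the bound via the injectivity of $E_0$ as above.
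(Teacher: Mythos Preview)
Your proof is correct and follows essentially the same approach as the paper. For non-triviality you both invoke Lemma~\ref{ni1} with $\phi=z$, and for the dimension bound you both use the evaluation $E_0:\cl M\to\bb C^n$; the only cosmetic difference is that the paper argues the range of $E_0^*$ spans $\cl M\ominus(\cl M\cap zH^2(\bb D,\bb C^n))$, while you argue the dual statement that $E_0$ is injective there---these are the same fact, since $\ker E_0=\cl M\cap zH^2(\bb D,\bb C^n)$.
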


\begin{proof} First note that since $\cl M$ is non-zero, therefore Lemma \ref{ni1} implies that $\cl M$ cannot be contained in $zH^2(\bb D, \bb C^n).$  

Now as $\mathcal{M}$ is contractively contained in $\mathcal{H}^2(\mathbb{D}, \mathbb{C}^n)$, for each $w\in \mathbb{D},$ the point evaluation map 
$ E_w : \mathcal{M} \longrightarrow \mathbb{C}^n$ given by $E_w(f)=f(w)$ 
is bounded. Let $\{e_1,\dots, e_n\}$ be the canonical orthonormal basis of $\bb C^n.$ We claim that the set $\{ g_i:=E^*_0(e_i) : 1 \le i \le n\}$ spans $\mathcal{M}\ominus(\mathcal{M}\cap zH^2(\mathbb{D}, \mathbb{C}^n))$. 
For any $f$ in $\mathcal{M}\cap zH^2(\mathbb{D}, \mathbb{C}^n)$, 
$$
\langle g_i, f \rangle_\mathcal{M}
= \langle e_i, E_0(f) \rangle_{\bb C^n}
=\langle e_i, f(0) \rangle_{\bb C^n}=\langle e_i,0 \rangle_{\bb C^n}=0.
$$
Thus each $g_i$ belongs to $\cl M\ominus (\cl M\cap zH^2(\bb D, \bb C^n)).$ Further, if $f\in \cl M$ is orthogonal to each $g_i$. Then $f(0)$ is orthogonal to 
$e_i$ for each $1 \le i \le n$. This forces $f(0)=0$ which means 
$f \in \cl M\cap zH^2(\mathbb{D}, \mathbb{C}^n)$. Hence, the set $\{g_i: 1\le i\le n\}$ spans $\mathcal{M}\ominus(\mathcal{M}\cap zH^2(\mathbb{D}, \mathbb{C}^n))$. This completes the proof. 
\end{proof}

\begin{thm}\label{MTH} Let $\mathcal{M}$ be a non-zero Hilbert space contractively contained in $H^2(\mathbb{D},\mathbb{C}^n).$ Suppose $\cl M$ is nearly invariant under $S^*$ and $\|z h\|_\mathcal{M}\ge \|h\|_\cl M$ 
whenever $zh \in \mathcal{M}$. If $\{g_1,\dots, g_r\}, \ 1\le r\le n$, is an orthonormal basis for $\cl M\ominus (\cl M\cap zH^2(\bb D, \bb C^n)),$ then there exists a vector subspace $\mathcal{N}$ of $H^2(\mathbb{D},\mathbb{C}^r)$ that is $S^*$-invariant such that $\cl M$ is in one-to-one correspondence with $\cl N$ via the linear map 
$$
\cl G: \cl N \to \cl M \quad  {\rm given \ by} \quad f\mapsto Gf,
$$
where $G$ is the matrix-valued function whose columns are $g_1, \dots, g_r.$ Moreover, for each $h\in \cl M, \ ||h||_\cl M \ge ||f||_2,$ where $h=Gf$ with $f\in \cl N.$
\end{thm}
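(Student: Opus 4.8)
The plan is to mimic the classical (Hitt/CCP) proof structure while being scrupulous about which topology and which norm inequality is used at each step, since $\cl M$ need not be closed in $H^2(\bb D,\bb C^n)$ and only carries a one-sided norm estimate. First I would set up the candidate for $\cl N$. Given $h\in\cl M$, by Proposition \ref{ni2} the space $\cl E:=\cl M\ominus(\cl M\cap zH^2)$ is nonzero of dimension $r\le n$, and $h$ decomposes as $h = h_0 + zh_1$ with $h_0\in\cl E$ and $zh_1\in\cl M\cap zH^2$; near invariance gives $h_1\in\cl M$. Writing $h_0 = Gc$ for a constant vector $c\in\bb C^r$ (since the columns of $G$ are an orthonormal basis of $\cl E$) and iterating the decomposition on $h_1$, I would produce a formal power series $f(z)=\sum_{k\ge 0} c_k z^k$ with $c_k\in\bb C^r$ such that $h = Gf$ at least formally, and then define $\cl N$ to be the set of all such $f$ arising from $h\in\cl M$, with the map $\cl G\colon f\mapsto Gf$. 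The near-invariance of $\cl M$ translates directly into $S^*$-invariance of $\cl N$: if $Gf\in\cl M$ with $f(0)=0$, then $Gf = z\,G(S^*f)$ lies in $\cl M\cap zH^2$, so $G(S^*f)\in\cl M$, i.e.\ $S^*f\in\cl N$.

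**Next** I would prove the norm estimate, which is where the hypothesis $\|zh\|_\cl M\ge\|h\|_\cl M$ does its work, and which simultaneously shows $f\in H^2(\bb D,\bb C^r)$ (a priori $f$ is only a formal series). In the decomposition $h = h_0 + zh_1$, orthogonality of $h_0$ to $\cl M\cap zH^2$ in the $\cl M$-inner product gives $\|h\|_\cl M^2 = \|h_0\|_\cl M^2 + \|zh_1\|_\cl M^2 \ge |c_0|^2 + \|h_1\|_\cl M^2$, using $\|h_0\|_\cl M = |c_0|$ (orthonormality of the $g_i$) and the assumed inequality $\|zh_1\|_\cl M\ge\|h_1\|_\cl M$. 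Iterating, $\|h\|_\cl M^2 \ge \sum_{k=0}^{N}|c_k|^2$ for every $N$, hence $\sum_k|c_k|^2 = \|f\|_2^2 \le \|h\|_\cl M^2 < \infty$, so $f\in H^2(\bb D,\bb C^r)$ and the claimed inequality $\|h\|_\cl M\ge\|f\|_2$ holds. The same bound shows the series for $f$ converges locally uniformly and that $Gf = h$ genuinely as analytic functions (each partial-sum remainder $z^{N}h_N$ is analytic and its $\cl M$-norm, hence its values, go to zero on compact subsets of $\bb D$ by the contractive containment and the point-evaluation bound on $\cl M$).

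**The one-to-one correspondence** then follows: $\cl G$ is surjective onto $\cl M$ by construction; for injectivity, if $Gf = 0$ with $f\in\cl N$, the norm estimate forces $\|f\|_2 \le \|0\|_\cl M = 0$, so $f=0$. (Equivalently, the coefficients $c_k$ are uniquely determined by $h$ at each stage of the iteration.) I would also record that $\cl N$ is a vector subspace of $H^2(\bb D,\bb C^r)$ — linearity of the decomposition and of $\cl G$ makes this routine — and that $S^*$-invariance was already established above; the theorem does not claim $\cl N$ is closed, and indeed Example \ref{EH2} shows it need not be.

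**The main obstacle** I anticipate is the bookkeeping in the iteration: justifying that the formally constructed series $f$ both represents $h$ as an honest analytic identity $h=Gf$ on $\bb D$ and lies in $H^2(\bb D,\bb C^r)$, rather than merely formally. The delicacy is that $\cl M$'s norm is only contractively comparable to the $H^2$-norm and we only have the \emph{lower} bound $\|zh\|_\cl M\ge\|h\|_\cl M$, so the telescoping must be done in the $\cl M$-norm and transferred to pointwise/$H^2$ control via the boundedness of point evaluations on $\cl M$ (which holds precisely because $\cl M$ is contractively contained in $H^2(\bb D,\bb C^n)$, where point evaluations are bounded). Everything else — the $S^*$-invariance of $\cl N$, injectivity of $\cl G$, the fact that $\{g_i\}$ spans $\cl E$ — is either immediate or already supplied by Proposition \ref{ni2} and Lemma \ref{ni1}.
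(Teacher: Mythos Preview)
Your approach is essentially the paper's: iterate the orthogonal decomposition $h = Qh + zRh$ in $\cl M$, read off coefficients $c_k\in\bb C^r$ via the orthonormal basis $\{g_i\}$, and use the hypothesis $\|zh\|_{\cl M}\ge\|h\|_{\cl M}$ to obtain the telescoping inequality $\|h\|_{\cl M}^2\ge\sum_k|c_k|^2$. Two points need repair, however.

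First, your $S^*$-invariance argument only treats the case $f(0)=0$: you show that if $Gf\in\cl M$ with $f(0)=0$ then $G(S^*f)\in\cl M$. But full $S^*$-invariance of $\cl N$ demands $S^*f\in\cl N$ for \emph{every} $f\in\cl N$. The fix is short: for general $f$, the vector $Gf(0)$ is a constant linear combination of the $g_i$ and hence lies in $\cl E\subseteq\cl M$, so $G(f-f(0))=Gf-Gf(0)\in\cl M$; since $(f-f(0))(0)=0$, your argument now gives $S^*f=S^*(f-f(0))\in\cl N$. (Equivalently, and more in keeping with your construction: the first iterate $h_1\in\cl M$ generates via the same iteration exactly the coefficient sequence $(c_1,c_2,\dots)$, so $S^*f$ arises from $h_1$ and hence lies in $\cl N$ by definition.) The paper makes this explicit by identifying $Q(Gf)=Gf(0)$ and computing $R(Gf)=G(S^*f)$ directly.

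Second, the assertion that ``the $\cl M$-norm of $z^Nh_N$ goes to zero'' is not what your telescoping inequality yields; it gives only that $\|h_N\|_{\cl M}$ is \emph{bounded} by $\|h\|_{\cl M}$. That is enough: boundedness of point evaluations on $\cl M$ (inherited from contractive containment in $H^2$) gives $|z^Nh_N(z)|\le|z|^N\cdot C_z\|h\|_{\cl M}\to 0$ locally uniformly, which is precisely the mechanism you invoke in the next clause. The paper sidesteps this by ``comparing the coefficient of $z^m$'': the remainder $z^{m+1}R^{m+1}h$ contributes nothing below degree $m+1$, so the $m$-th Taylor coefficient of $h$ agrees with that of $Gf$ for every $m$.
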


\begin{proof} Firstly, using Proposition \ref{ni2}, $\cl M\cap zH^2(\bb D, \bb C^n )$ is closed in $\cl M,  \ \cl M\ominus (\cl M\cap zH^2(\bb D, \bb C^n))$ is non-zero, 
and dimension of $\cl M\ominus (\cl M\cap zH^2(\bb D, \bb C^n))$ is at most $n.$ 
Let $P$ denote the orthogonal projection of $\cl M$ onto $\cl M\cap zH^2(\bb D, \bb C^n)$ and let $Q=I_{\cl M}-P.$ 

Now since $\cl M$ is neary invariant under $S^*,$ therefore $S^*P$ is a well-defined linear mapping of $\cl M$ into itself. Let us define 
$$
R=S^*P.
$$ 
Then the hypothesis $||f||_{\cl M} \le ||zf||_{\cl M}$ whenever $zf\in \cl M$ implies that $R$ is a contraction on $\cl M$.   

Fix any $h\in \cl M$. We decompose it as $h=Qh+Ph.$ Note that $Ph\in zH^2(\bb D, \bb C^n).$ This means $Ph=SS^*Ph=SRh.$  Then we have 
\begin{equation}\label{MTH1}
h=Qh+SRh
\end{equation}
and 
\begin{equation}\label{MTH2}
||Qh||_{\cl M}^2+||Rh||_{\cl M}^2\le ||Qh||_{\cl M}^2+||SRh||_{\cl M}^2 = ||h||_{\cl M}^2.
\end{equation}

As $\{g_1,\dots, g_r\}$ is an orthonormal basis for 
$\cl M\ominus (\cl M\cap zH^2(\bb D, \bb C^n)),$ therefore we can write  
$$
Qh=a_{01}g_1 + a_{02}g_2 +...+a_{0r}g_r = G A_0,
$$
where $a_{01}, \dots, a_{0r}$ are scalars, $A_0=\begin{pmatrix} a_{01} \\ \vdots \\ a_{0r}\end{pmatrix}$ $\in \mathbb{C}^r$, and $G$ is the 
$n\times r$ matrix-vaued function on $\bb D$ with columns $g_1, \dots, g_r$. Thus,   
\begin{equation}\label{MTH3}
h=GA_0+ SRh. 
\end{equation}
Further, $||Qh||^2_{\cl M}=\sum_{i=1}^r |a_{0i}|^2=||A_0||^2_{\bb C^r}$. Thus, Inequality (\ref{MTH2}) yields 
\begin{equation}\label{MTH4}
||A_{0}||^2_{\bb C^r} +||Rh||_{\cl M}^2\le ||h||_{\cl M}^2.
\end{equation}

Now $Rh\in \cl M.$ Then repeating the above arguments for $Rh$ in place of $h$, we obtain a vector $A_1\in \bb C^r$ such that 
$$
Rh=GA_1+SR^2h
$$
 and $||A_1||^2_{\bb C^r}+||R^2h||^2_{\cl M}\le ||Rh||^2_{\cl M}.$ Then Equations (\ref{MTH3}) and (\ref{MTH4}) yields 
\begin{equation}\label{MTH5}
h=GA_0+ G(zA_1) + S^2R^2h
\end{equation}
and 
\begin{equation}\label{MTH6}
||A_{0}||^2_{\bb C^r} +||A_1||^2_{\bb C^r} + ||R^2h||_{\cl M}^2\le ||h||_{\cl M}^2.
\end{equation}

Again, $R^2h\in \cl M.$ Continuing as above, we obtain a sequence $\{A_n\}$ in $\bb C^r$ such that for each postive integer m 
\begin{equation}\label{MTH7}
h=G(A_0+ A_1z+\cdots+ A_m z^m) + S^{m+1}R^{m+1}h
\end{equation}
and 
\begin{equation}\label{MTH8}
\sum_{i=0}^m ||A_i||^2_{\bb C^r} + ||R^{m+1}h||_{\cl M}^2\le ||h||_{\cl M}^2.
\end{equation}
   
The Inequality (\ref{MTH8}) establishes that  
$$
\sum_{n=0}^{\infty}\|A_n\|^2_{\bb C^r} < \infty. 
$$
Thus, 
$$
f(z)=\sum_{m=0}^{\infty}A_mz^m
$$ 
belongs to $H^2(\mathbb{D},\mathbb{C}^r)$.

Cearly, $Gf$ is analytic on $\bb D.$ Now comparing the coefficient of $z^m$ in $h$, $Gf$, and using Equation (\ref{MTH7}), we conclude, 
$h=Gf.$ Also, using Equation (\ref{MTH8}), 
$$
||f||_2\le ||h||_{\cl M}.
$$
Hence, for each $h\in \cl M$ there exists an $f\in H^2(\bb D, \bb C^r)$ such that $h=G f$ and $||f||_2\le ||h||_{\cl M}.$  
 
Let $\cl N=\{f\in H^2(\bb D, \bb C^r): Gf\in \cl M\}.$ Then $\cl N$ is a vector subspace of $H^2(\bb D, \bb C^r)$. Clearly, the mapping $\cl G:\cl N \to \cl M$ given by 
$\cl G(f)=Gf$ is a well-defined surjective linear map. To show it is one-to-one, let $Gf=0.$ Suppose $f(z)=\sum_{m=0}^\infty A_mz^m.$ Write $f=A_0+zf_1,$ where 
$f_1(z)=\sum_{m=0}^\infty A_{m+1}z^m.$ Then $Gf=GA_0+G(zf_1)$ and 
$zGf_1\in \cl M\cap zH^2(\bb D,\bb C^n).$ This implies that $GA_0=Q(Gf)=0$, which further implies that $A_0=0$; hence $f=zf_1.$ Thus, $Gf_1=0. $ Continuing in this way, we can show that $A_k=0$ for all $k.$ 
Hence, $f=0.$ Thus $\cl G$ is one-to-one.  

Finally we shall show that    
$\cl N$ is invariant under $S^*.$ 
Let $f=\sum_{m=0}^\infty A_m z^m\in \cl N.$ Then there exists an $h\in \cl M$ such that $h=Gf$. Now,   
$$
    h=Gf=Q(Gf)+S R(Gf).
$$
But $Q (Gf)=GA_0.$ Therefore
\begin{align*}
    h=GA_0+S R(Gf)
\end{align*}
which implies
$$
SR(Gf)=G(f-A_0)=G \Bigg(\sum_{k=1}^{\infty}A_k z^{k} \Bigg),
$$
and hence 
$$
R(Gf)=G\Bigg(\sum_{k=1}^{\infty}A_k z^{k-1}\Bigg)=G\Bigg(S^*\Bigg(\sum_{k=0}^{\infty}A_k z^{k}\Bigg)\Bigg)=G(S^*f).
$$
Since $R(Gf) \in \mathcal{M}$, therefore by definition, $S^*f\in \mathcal{N}$. Hence $\mathcal{N}$ is invariant under $S^*$. This completes the proof.
\end{proof}

\begin{remark}\label{MRH} Note that the Hitt's description of a nearly $S^*$-invariant subspace of $H^2(\bb D)$ (Theorem \ref{hitt}) as well as it's vectorial generalization (Theorem \ref{CCP}) both have three parts to them, namely, the represenation in terms of $S^*$-invariant subspace, the norm preservation between nearly $S^*$-invariant subspace and the corresponding $S^*$-invariant subspace, and the closedness of the $S^*$-invariant subspace. Now the description (Theorem \ref{MTH}) we obtain for our setting does gives a represenation that is similar to the one given in 
Theorem \ref{hitt} for the scalar case and Theorem \ref{CCP} for the vector case, but our description in the general case neither gaurantees the preservation of norm nor does it gaurantees the closedness of the $S^*$-invariant vector subspace. Interestingly, with the help of the following two examples we show that either of these can't be  promised for our setting in general.
\end{remark}

\begin{ex}\label{EH1}(Failure of equality of norms). Let $\cl M=span\{1+z,z+z^2\}$ and $\mathcal{U} : H^2(\mathbb{D}) \longrightarrow H^2(\mathbb{D})$ be the linear operator given by 
$$
\cl U(1)=1, \quad \cl U (z)=\sqrt{2}z, \quad \cl U(z^2)=\sqrt{2}z^2, \quad \cl U(z^n)=z^n \ {\rm for} \  n\ge 3.
   $$
Define a norm $\|.\|_\mathcal{M}$ on $\mathcal{M}$ by
\begin{align*}
    \|f\|_\mathcal{M}:=\|\mathcal{U}f\|_2 \quad {\rm for} \ f\in \mathcal{M}
\end{align*}
Then $\mathcal{M}$ equipped with norm $\|\cdot\|_\mathcal{M}$ is a Hilbert space contractively contained in $H^2(\mathbb{D})$ that is nearly $S^*$-invariant.  

Clearly, $\cl M\cap zH^2(\bb D)=span\{z+z^2\}.$ Let $f\in \cl M\cap zH^2(\bb D).$ Then $f=\alpha (z+z^2)$ for some scalar $\alpha$ and 
$||S^*f||_{\cl M}=|\alpha ||\cl U(1+z)||_2=|\alpha|\sqrt{3}.$ On the other hand, $||f||_{\cl M}=|\alpha| ||\cl U(z+z^2)||_{\cl M}=|\alpha|2.$ Therefore, 
$||S^*(f)||_{\cl M}\le ||f||_{\cl M}$ for each $f\in \cl M\cap zH^2(\bb D)$ which simply means that $||zh||_{\cl M}\ge ||h||_{\cl M}$ whenever $zh\in \cl M.$
Thus, $\cl M$ satisfy the hypotheses of Theorem \ref{MTH}.

Now we can verify that 
$$
\cl M\ominus (\cl M\cap zH^2(\bb D)) = span\{g\},
$$
where $g(z)=\frac{(z-2)(z+1)}{2\sqrt{2}}$ and $||g||_{\cl M}=1.$  

Then using Theorem \ref{MTH}, there exists a $S^*$-invariant vector subspace $\cl N$ of $H^2(\bb D)$ such that $\cl M=g\cl N.$ 

For $1+z \in \mathcal{M}$, we have $1+z=g\frac{2\sqrt{2}}{z-2}$. Therefore,  $f=\frac{2\sqrt{2}}{z-2}\in \mathcal{N}$.  
Notice that 
\begin{align*}
\|z+1\|^2_\mathcal{M}
&=\|\sqrt{2}z+1\|_2^2\\
&=3
\end{align*}
and 
\begin{eqnarray*}
\|f\|_2^2&=&\Big\|\frac{2\sqrt{2}}{z-2}\Big\|_2^2\\
&=& 8\Big\|\frac{1}{z-2}\Big\|_2^2\\
&=& \frac{8}{3}.
\end{eqnarray*}
Hence, $1+z=gf$ and $\|1+z|\|_{\cl M}>\|f\|_2.$ 
\end{ex}

\begin{ex}\label{EH2}(Failure of the closedness). Let $\cl D$ denote the classical Dirichlet space consisting of analytic functions on the unit disc $\bb D$ with the norm 
$||f||_{\cl D}^2 = \sum_{i=0}^\infty |a_i|^2(i+1)$ for $f(z)=\sum_{i=0}^\infty a_iz^i\in \cl D.$ Recall that $\cl D$ is a Hilbert space contractively contained in 
$H^2(\bb D),$ and it is not closed in $H^2(\bb D).$  

Let $\theta$ be a bounded analytic function on $\bb D$ with $||\theta||_\infty=1$ and $\theta(0)>0.$ Set 
$$
\cl M = \theta \cl D
$$
and define $||\theta f||_{\cl M}=||f||_{\cl D}.$ Clearly, $\cl M$ is a vector subspace of $H^2(\bb D),$ $||\cdot||_{\cl M}$ a norm on $\cl M$ with respect to which $\cl M$ becomes a Hilbert 
space contractively contained in $H^2(\bb D).$  

Let $f\in \cl M\cap zH^2(\bb D).$ Then $f=\theta h$ for some $h\in \cl D$ and $f(0)=0.$ Thus, $h(0)=0$ because $\theta(0)>0.$ This implies that $h=zh_1$ for some 
$h_1\in H^2(\bb D).$ But $zh_1\in \cl D$ implies $h_1\in \cl D$ and $||zh_1||_{\cl D}\ge ||h_1||_{\cl D}.$ Therefore, 
$S^*(f)=\theta h_1\in \cl M$ and $||S^*f||_{\cl M}=||h_1||_{\cl D}\le ||zh_1||_{\cl D} = ||f||_{\cl M}.$ This means $\cl M$ is nearly $S^*$-invariant and $||zg||_{\cl M}\ge ||g||_{\cl M}$ whenever $zg\in \cl M.$ 
Hence, $\cl M$ satisfy the hypotheses of Theorem \ref{MTH}. Note that $\cl M\ominus (\cl M\cap zH^2(\bb D))=span\{\theta\}.$ Therefore there exists an $S^*$-invariant vector subspace $\cl N$ of $H^2(\bb D)$ such that $\cl M=\theta \cl N.$ But this simply means $\cl N$ equals $\cl D$, which is not closed in $H^2(\bb D).$ Hence, this examples shows that the $S^*$-invariant vector subspace we obtain in the represenation given by Theorem \ref{MTH} may not be closed. 
\end{ex}

\section{nearly invariant brangesian subspaces related to multiplication operators on reproducing kernel Hilbert spaces}\label{rkhs} 

In \cite{Era}, Erard extended the study of nearly invariant subspaces on $H^2(\bb D)$ to reproducing kernel Hilbert spaces. 
 
 \begin{thm}(Erard \cite[Theorem 5.1]{Era})\label{Era} Let $\cl H$ be an RKHS consisting of complex-valued analytic functions on $\bb D$ on which multiplication with $z$ is 
well-defined with dimension of $\cl H\ominus z\cl H$ equals $1$ and $||h||_{\cl H}\le ||zh||_{\cl H}$ for all $h\in \cl H.$ Assume also that there exists $f\in \cl H$ with 
$f(0)\ne 0.$ Let $\cl M$ be a non-zero subspace of $\cl H$ which is nearly invariant under the backward shift. Let $g$ be any unit vector of $\cl M\ominus (\cl M\cap z\cl H).$ Then there exists a linear subspace $\cl N$ of $H^2(\bb D)$ such that 
$$   
\cl M= g\cl N \ \  {\rm and} \ ||h||_{\cl H}\ge ||\frac{h}{g}||_2.
$$
Besides, $\cl N$ is invariant under the backward shift and $g(0)\ne 0.$
\end{thm}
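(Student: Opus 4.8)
The plan is to run, in the scalar case and with $\cl M$ a genuine closed subspace, the same iteration used in the proof of Theorem~\ref{MTH}, after first extracting from the hypotheses the structural fact that $z\cl H$ consists exactly of the functions in $\cl H$ vanishing at $0$. Indeed, $\|h\|_{\cl H}\le\|zh\|_{\cl H}$ says $M_z$ is bounded below, so $z\cl H$ is closed and $\cl H\ominus z\cl H$ is meaningful (one-dimensional, by hypothesis); since $(zh)(0)=0$ we have $z\cl H\subseteq\ker(\mathrm{ev}_0)$, and the assumed $f$ with $f(0)\ne0$ makes $\ker(\mathrm{ev}_0)$ proper, so a codimension count forces $z\cl H=\ker(\mathrm{ev}_0)$. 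In particular $S^*h=(h-h(0))/z$ maps $\cl H$ into itself, and ``nearly $S^*$-invariant'' coincides with ``nearly invariant under division by $z$''. As $\cl M$ is closed in $\cl H$, the space $\cl M\cap z\cl H$ is closed in $\cl M$ and equals $\{f\in\cl M:f(0)=0\}$; by Lemma~\ref{ni1} the non-zero $\cl M$ is not contained in $z\cl H$, so $\mathrm{ev}_0|_{\cl M}\ne0$, hence $\cl M\ominus(\cl M\cap z\cl H)$ is one-dimensional and is spanned by the given unit vector $g$. If $g(0)=0$ then $g\in z\cl H\cap\cl M=\cl M\cap z\cl H$, which together with $g\perp(\cl M\cap z\cl H)$ and $g\ne0$ is absurd; so $g(0)\ne0$.

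Now let $P$ be the orthogonal projection of $\cl M$ onto $\cl M\cap z\cl H$ and $Q=I_{\cl M}-P$, so the range of $Q$ is $\bb C g$. Since $\cl M$ is nearly $S^*$-invariant and $(Ph)(0)=0$, the map $R:=S^*P$ sends $\cl M$ into $\cl M$; from $Ph=z\,S^*Ph$ and $\|u\|_{\cl H}\le\|zu\|_{\cl H}$ (applied with $u=S^*Ph$, using that on $\cl M$ the $\cl H$-norm is the ambient norm) one gets $\|Rh\|_{\cl M}\le\|Ph\|_{\cl M}$, hence $\|Qh\|_{\cl M}^2+\|Rh\|_{\cl M}^2\le\|h\|_{\cl M}^2$. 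Writing $Qh=a_0g$ with $|a_0|=\|Qh\|_{\cl M}$ gives $h=a_0g+zRh$; applying the same step to $Rh,R^2h,\dots$ and substituting yields, for every $m$,
$$
h=g\Big(\sum_{i=0}^m a_iz^i\Big)+z^{m+1}R^{m+1}h,\qquad \sum_{i=0}^m|a_i|^2+\|R^{m+1}h\|_{\cl M}^2\le\|h\|_{\cl M}^2 .
$$
The inequality forces $\sum_{i\ge0}|a_i|^2<\infty$, so $f(z):=\sum_{i\ge0}a_iz^i$ lies in $H^2(\bb D)$ with $\|f\|_2\le\|h\|_{\cl H}$. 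Since point evaluation at each $w\in\bb D$ is bounded on $\cl M$ and $\|R^{m+1}h\|_{\cl M}\le\|h\|_{\cl M}$, the term $w^{m+1}(R^{m+1}h)(w)\to0$ as $m\to\infty$; letting $m\to\infty$ in the identity gives $h=gf$ pointwise on $\bb D$.

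Finally, set $\cl N=\{f\in H^2(\bb D):gf\in\cl M\}$, a linear subspace of $H^2(\bb D)$. The previous step shows $\cl M\subseteq g\cl N$, while $g\cl N\subseteq\cl M$ by definition, so $\cl M=g\cl N$ with $\|h\|_{\cl H}\ge\|h/g\|_2$ for every $h\in\cl M$; the correspondence $f\mapsto gf$ is injective because $g$ is not the zero function. For backward-shift invariance, take $f\in\cl N$ and write $f=f(0)+zf_1$ with $f_1=S^*f$; then $zgf_1=gf-f(0)g\in\cl M$ vanishes at $0$, hence lies in $\cl M\cap z\cl H$, so $R(gf)=S^*(zgf_1)=gf_1\in\cl M$, i.e.\ $S^*f=f_1\in\cl N$. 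I expect the only non-mechanical points to be the identification $z\cl H=\ker(\mathrm{ev}_0)$ and the observation that $R$ is automatically an $\cl M$-contraction here (because $\cl M$ carries the norm inherited from $\cl H$, whereas in Theorem~\ref{MTH} the corresponding inequality had to be imposed by hand); once these are settled, the argument is the same bookkeeping as in the proof of Theorem~\ref{MTH}.
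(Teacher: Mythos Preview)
Your proof is correct. Note that the paper does not supply its own proof of Theorem~\ref{Era}; it is quoted from \cite{Era}, and the paper instead proves the generalizations Theorem~\ref{MTH} and Theorem~\ref{MTRKHS}. Your argument is precisely the specialization of those proofs (via Lemma~\ref{rkhs-l}) to the case $\phi=z$ with $\cl M$ a genuine closed subspace: the same projections $P,Q$, the same contraction $R=S^*P$, the same telescoping decomposition and $\ell^2$ bound, and the same definition of $\cl N$. The only cosmetic difference is that you pass to $h=gf$ by pointwise evaluation (using $|w|^{m+1}\|R^{m+1}h\|_{\cl M}\to0$ and boundedness of point evaluations), whereas the paper, in Theorem~\ref{MTRKHS}, phrases this as ``$h-\sum g_if_i$ has a zero of every order at $0$''; these are the same observation. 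Your preliminary identification $z\cl H=\ker(\mathrm{ev}_0)$ and the verification $g(0)\ne0$ are exactly the extra ingredients needed to place Erard's hypotheses into the framework of the paper's iteration, and they are handled correctly.
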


Our main result (Theorem \ref{MTRKHS}) of this section generalizes Erard's Theorem. We describe Hilbert spaces that are contractively contained in an RKHS of analytic functions on the unit disc which are nearly invariant under division by an inner function. So, in our result, subspaces have been replaced with contractively contained 
Hilbert spaces and multiplication with $z$ has been replaced with an inner function.
 
Before proceeding further, we would like to compare Erard's theorem with Hitt's. Erard's theorem replaces $H^2(\bb D)$ by a much general RKHS, and 
also, instead of assuming $M_z$ to be an isometry, it only assumes it to be bounded below. However, the drawback of Erard's theorem is that although the representation of a nearly $S^*$-invariant subspace when $\cl H=H^2(\bb D)$ is very similar to what Hitt's theorem gives, it does not infer the correspondence between a nearly $S^*$-invariant 
subspace and the corresponding $S^*$-invariant vector subspace to be an isometry, and in fact, it doesn't even guarantee the closedness of the $S^*$-invariant vector subspace.  

Interestingly, we can deduce our Theorem \ref{MTH} (the scalar case) as a corollary from Erard's Theorem without missing any detail because we have shown, with  Examples \ref{EH1} and \ref{EH2}, that the two features of the description of a nearly $S^*$-invariant subspaces that Erard's theorem misses do not hold for our setting in general. 

We first prove the following analogue of Lemma 2.1 from \cite{Era} that played a pivotal role in proving Erard's Theorem. Indeed, we have proved this result (in disguise) within the proof of Theorem \ref{MTH}, and we need it again for Theorem \ref{MTRKHS}. We feel that it is a crucial observation and is interesting in its own right; so, we are proving it here as a separate result.

\begin{lemma}\label{rkhs-l} Let $T$ be a bounded operator on a Hilbert space $\mathcal{H}$  such that $\|Th\|_\mathcal{H}\ge \|h\|_\mathcal{H}$ for all $h$ in $\mathcal{H}$. Let 
$\mathcal{M}$ be a Hilbert space contractively contained in $\cl H$ such that $h\in \cl M$ whenever $Th\in \cl M$ and $\|Th\|_\mathcal{M}\ge \|h\|_\mathcal{M}$. If 
$P$ denotes the orthogonal projection of $\cl M$ onto $\cl M\cap T{\cl H}$ and $Q= I_{\cl M}- P,$ then $R:= (TT^*)^{-1}T^*P$ is a 
well-defined contraction on $\cl M$, and for every positive integer $m$, we can decompose each $h\in \cl M$ as 
$$
h=\sum_{k=0}^{m}T^kQR^kh + T^{m+1}R^{m+1}h 
$$
and 
$$
\|h\|^2_\mathcal{M}\ge \sum_{k=0}^m\|QR^kh\|^2_\mathcal{M}.
$$
\end{lemma}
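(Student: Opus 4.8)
The plan is to isolate the operator $R$ and verify its basic properties first, then obtain the decomposition by a one-step induction whose base case is the splitting $h=Qh+T(Rh)$, and finally read off the norm inequality by telescoping the Pythagorean identity attached to $P$.

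First I would set up $R$ carefully. Since $\|Th\|_{\mathcal H}\ge\|h\|_{\mathcal H}$ for all $h$, the operator $T$ is injective and $T\mathcal H$ is closed in $\mathcal H$; hence, by the lemma of Section~\ref{def} on intersections of a contractively contained Hilbert space with subspaces that are closed in the ambient space, $\mathcal M\cap T\mathcal H$ is closed in $\mathcal M$, so the orthogonal projection $P$ of $\mathcal M$ onto $\mathcal M\cap T\mathcal H$, and with it $Q=I_{\mathcal M}-P$, are well defined. For $h\in\mathcal M$ we have $Ph\in\mathcal M\cap T\mathcal H\subseteq T\mathcal H$, so by injectivity of $T$ there is a unique element $Rh\in\mathcal H$ with $T(Rh)=Ph$; this defines the linear map $R$ of the statement. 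Since $T(Rh)=Ph\in\mathcal M$, the hypothesis that $\mathcal M$ is nearly invariant under division by $T$ forces $Rh\in\mathcal M$, so $R$ maps $\mathcal M$ into itself, and
$$\|Rh\|_{\mathcal M}\le\|T(Rh)\|_{\mathcal M}=\|Ph\|_{\mathcal M}\le\|h\|_{\mathcal M},$$
the first inequality being the norm hypothesis on $\mathcal M$ applied to $Rh$ (valid because $T(Rh)=Ph\in\mathcal M$) and the last holding because $P$ is an orthogonal projection on $\mathcal M$; thus $R$ is a contraction on $\mathcal M$.

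Next I would prove the decomposition by induction on $m$. The base case $m=0$ is exactly $h=Qh+Ph=QR^0h+T(Rh)$. Assuming $h=\sum_{k=0}^{m}T^kQR^kh+T^{m+1}R^{m+1}h$, I apply the base-case identity to $R^{m+1}h\in\mathcal M$ in place of $h$, obtaining $R^{m+1}h=QR^{m+1}h+T(R^{m+2}h)$, and substitute this into the last term, which produces the identity for $m+1$. The norm inequality then follows by iteration: since $Qh\perp Ph=T(Rh)$ in $\mathcal M$, the Pythagorean identity gives $\|h\|_{\mathcal M}^2=\|Qh\|_{\mathcal M}^2+\|T(Rh)\|_{\mathcal M}^2\ge\|Qh\|_{\mathcal M}^2+\|Rh\|_{\mathcal M}^2$; applying the same bound successively to $Rh,R^2h,\dots,R^mh$ (all of which lie in $\mathcal M$) and discarding the final $\|R^{m+1}h\|_{\mathcal M}^2$ term yields $\|h\|_{\mathcal M}^2\ge\sum_{k=0}^m\|QR^kh\|_{\mathcal M}^2$.

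There is no genuinely hard step here; this is essentially the bookkeeping already carried out (with $T=S$ and $\mathcal H=H^2(\mathbb D,\mathbb C^n)$) inside the proof of Theorem~\ref{MTH}. The only point requiring care is the setup in the second paragraph: one must check both that $\mathcal M\cap T\mathcal H$ is closed in $\mathcal M$, so that the orthogonal projection $P$ exists, and that $Rh$ genuinely returns to $\mathcal M$ — the latter being the sole place where the near-invariance hypothesis enters. After that, the decomposition is a two-line induction and the norm estimate a telescoping of orthogonal splittings.
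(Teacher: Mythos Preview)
Your proposal is correct and follows essentially the same approach as the paper: establish $T(Rh)=Ph$, use near invariance and the norm hypothesis to conclude $R$ is a contraction on $\mathcal M$, then iterate the splitting $h=Qh+T(Rh)$ together with the Pythagorean identity. The only cosmetic difference is that you define $R$ directly as the unique preimage under the injective $T$ of $Ph\in T\mathcal H$, whereas the paper writes down an explicit left-inverse formula; your phrasing is arguably cleaner and sidesteps a possible typo in the formula, and you also make explicit the verification that $\mathcal M\cap T\mathcal H$ is closed in $\mathcal M$, which the paper leaves implicit.
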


\begin{proof} Let $h\in \cl M.$ Then 
\begin{eqnarray*}
TRh&=&T(TT^*)^{-1}T^*P(h)\\
&=&T(TT^*)^{-1}T^*Th_0 \ \ (Ph =T h_0 \text{ for some} \ h_0\in \mathcal{H})\\
&=&Th_0\\
&=&P(h)
\end{eqnarray*}
This shows $TRh\in \cl M$, but then $Rh\in \cl M$ and $||TRh||_{\cl M}\ge ||Rh||_{\cl M}.$  Thus, $||Rh||_{\cl M}\le ||TRh||_{\cl M}= ||Ph||_{\cl M}\le ||h||_{\cl M}$. 
Therefore, $R$ is a well-defined contraction on $\mathcal{M}$.  

Again, let $h$ in $\mathcal{M}$ and decompose it as   
\begin{equation}\label{rkhs1}
h=Qh + Ph=Qh + TRh.    
\end{equation}

Then 
\begin{equation}\label{rkhs2}
\|h\|^2_\mathcal{M}= \|Qh\|^2_\mathcal{M} + \|TRh\|^2_\mathcal{M} \ge \|Qh\|^2_\mathcal{M} + \|Rh\|^2_\mathcal{M}.
\end{equation}
Now $Rh \in \mathcal{M}$. Thus,
\begin{equation}\label{rkhs3}
Rh= QRh + TR^2h    
\end{equation}
and 
\begin{equation}\label{rkhs4}
||Rh||_{\cl M}^2\ge ||QRh||_{\cl M}^2 + ||R^2h||_{\cl M}^2.
\end{equation}

Then, using Inequalities (\ref{rkhs1}) - (\ref{rkhs4}), we have 
$$h= Qh + TQRh + T^2R^2h $$
and 
$$
\|h\|^2_\mathcal{M}\ge \|Qh\|^2_\mathcal{M} + \|QRh\|^2_\mathcal{M}+ \|R^2h\|^2_\mathcal{M} 
$$

Continuing this process, we obtain that for non-negative integer $m$, we can write 
$$
h=\sum_{k=0}^{m}T^kQR^kh + T^{m+1}R^{m+1}h
$$
and 
$$
\|h\|^2_\mathcal{M}\ge \sum_{k=0}^m\|QR^kh\|^2_\mathcal{M}.
$$
This completes the proof.
\end{proof}

\begin{thm}\label{MTRKHS} Let $\mathcal{H}$ be an RKHS consisting of analytic functions on $\bb D.$ Let $\phi$ be an inner function such that $\phi(0)=0, \ \phi \cl H\subseteq \cl H,$ and $||h||\le ||\phi h||$ for every $h\in \cl H$. 
Assume that if $\phi h\in \cl H$ for an analytic function $h$ on $\bb D,$ then $h\in \cl H.$
Let $\cl M$ be a Hilbert space contractively contained in $\cl H$ which is nearly invariant under division by $\phi$ and $||\phi h||_{\cl M}\ge ||h||_{\cl M}$ whenever $\phi h\in \cl M.$ Then there exists a vector subspace $\mathcal{N}$ of $H^2(\mathbb{D},\ell^2(I))$ invariant under $T_{\phi}^*$ such that $\cl M$ is in one-to-one correspondence with $\cl N$ via the linear map 
$$
\cl G: \cl N\to \cl M \quad {\rm given \ by} \quad (Gf)(z) = \sum_{i\in I}g_i(z)f_i(z) \ \ (\rm pointwise),
$$ 
where $f=(f_i)_{i\in I}$ and $\{g_i:i\in I\}$ is an orthonormal basis of $\cl M\ominus (\cl M\cap \phi \cl H).$ Moreover, $\|h\|_\mathcal{M}\ge \|f\|_{H^2(\mathbb{D},\ell^2(I))}$ if $h(z)=\sum_{i\in I}g_i(z)f_i(z)$ for $f=(f_i)_{i\in I}\in \cl N.$  

\end{thm}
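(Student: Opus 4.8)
The plan is to mimic the proof of Theorem \ref{MTH}, replacing the shift $M_z$ by $T=M_\phi$ and routing everything through Lemma \ref{rkhs-l}; the genuinely new point — and the place where it matters that $\phi$ is inner — is converting the formal decomposition coming out of Lemma \ref{rkhs-l} into an honest element of $H^2(\bb D,\ell^2(I))$. We may assume $\cl M\neq\{0\}$. Since $M_\phi$ is bounded below on $\cl H$, its range $\phi\cl H$ is closed in $\cl H$, so $\cl M\cap\phi\cl H$ is closed in $\cl M$ (the inclusion $\cl M\hookrightarrow\cl H$ being a contraction); and if $\cl M\ominus(\cl M\cap\phi\cl H)$ were $\{0\}$ then $\cl M\subseteq\phi\cl H$, contradicting Lemma \ref{ni1} since $\phi(0)=0$. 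So I fix an orthonormal basis $\{g_i:i\in I\}$ of $\cl M\ominus(\cl M\cap\phi\cl H)$. I then apply Lemma \ref{rkhs-l} with $T=M_\phi$ — its hypotheses hold because $M_\phi$ is bounded on $\cl H$ with $\|\phi h\|_{\cl H}\ge\|h\|_{\cl H}$, $\cl M$ is contractively contained in $\cl H$, near invariance under division by $\phi$ gives $h\in\cl M$ whenever $\phi h\in\cl M$, and $\|\phi h\|_{\cl M}\ge\|h\|_{\cl M}$ in that case — obtaining the orthogonal projection $P$ of $\cl M$ onto $\cl M\cap\phi\cl H$, $Q=I_{\cl M}-P$, and a contraction $R$ on $\cl M$ such that
\[
h=\sum_{k=0}^{m}\phi^{k}(QR^{k}h)+\phi^{m+1}R^{m+1}h
\qquad\text{and}\qquad
\|h\|_{\cl M}^{2}\ge\sum_{k=0}^{m}\|QR^{k}h\|_{\cl M}^{2}
\]
for every $h\in\cl M$ and every $m\ge0$.

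Next, for each $h\in\cl M$ I would build its preimage. Each $QR^{k}h$ lies in $\cl M\ominus(\cl M\cap\phi\cl H)$, so $QR^{k}h=\sum_{i\in I}(A_{k})_{i}g_{i}=GA_{k}$ for a unique constant vector $A_{k}\in\ell^{2}(I)$ with $\|A_{k}\|_{\ell^{2}(I)}=\|QR^{k}h\|_{\cl M}$, and a pointwise computation gives $\phi^{k}(QR^{k}h)=G(\phi^{k}A_{k})$, where $\phi^{k}A_{k}$ is the function $z\mapsto\phi(z)^{k}A_{k}$ in $H^{2}(\bb D,\ell^{2}(I))$. The functions $\phi^{k}A_{k}$, $k\ge0$, are mutually orthogonal in $H^{2}(\bb D,\ell^{2}(I))$: since $\phi^{k}$ is inner, $T_{\phi^{k}}$ is an isometry, so for $j>k$ we have $\langle\phi^{k}A_{k},\phi^{j}A_{j}\rangle=\langle A_{k},\phi^{j-k}A_{j}\rangle$, which vanishes because $\phi(0)=0$ forces $\phi^{j-k}A_{j}$ to vanish at the origin while $A_{k}$ is a constant. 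As $\|\phi^{k}A_{k}\|_{2}=\|A_{k}\|_{\ell^{2}(I)}$, the second displayed inequality shows that $f_{h}:=\sum_{k\ge0}\phi^{k}A_{k}$ converges in $H^{2}(\bb D,\ell^{2}(I))$ with $\|f_{h}\|_{2}^{2}=\sum_{k}\|A_{k}\|_{\ell^{2}(I)}^{2}\le\|h\|_{\cl M}^{2}$. To get $Gf_{h}=h$ I argue pointwise: for $z\in\bb D$, $f_{h}(z)=\sum_{k}\phi(z)^{k}A_{k}$ in $\ell^{2}(I)$; the double series $\sum_{i}g_{i}(z)\sum_{k}\phi(z)^{k}(A_{k})_{i}$ converges absolutely (Cauchy--Schwarz, together with $\sum_{i}|g_{i}(z)|^{2}<\infty$, which follows from Bessel's inequality applied to the reproducing kernel of $\cl M$ at $z$), so it may be summed in the other order to give $(Gf_{h})(z)=\sum_{k}\phi(z)^{k}(QR^{k}h)(z)$; on the other hand, evaluating the decomposition above at $z$ and letting $m\to\infty$, the tail $\phi(z)^{m+1}(R^{m+1}h)(z)$ tends to $0$ because $|\phi(z)|<1$ on $\bb D$ ($\phi$ being inner and non-constant) while $\|R^{m+1}h\|_{\cl M}\le\|h\|_{\cl M}$ keeps $(R^{m+1}h)(z)$ bounded (point evaluation being continuous on $\cl M$). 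Hence $h(z)=(Gf_{h})(z)$ for all $z$, i.e.\ $Gf_{h}=h$.

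I would then set $\cl N:=\{f_{h}:h\in\cl M\}$. Linearity of $Q$ and $R$ makes $h\mapsto f_{h}$ linear, so $\cl N$ is a vector subspace of $H^{2}(\bb D,\ell^{2}(I))$; the map $\cl G:\cl N\to\cl M$, $f\mapsto Gf$, is onto with $h\mapsto f_{h}$ a right inverse by the previous step, and since any $f\in\cl N$ equals $f_{h}$ for $h=\cl G(f)$, the same map is also a left inverse, so $\cl G$ is a bijection; the asserted norm inequality is then exactly the bound $\|f_{h}\|_{2}\le\|h\|_{\cl M}$ above. For $T_{\phi}^{*}$-invariance I apply the bounded operator $T_{\phi}^{*}$ term by term to $f_{h}=\sum_{k}\phi^{k}A_{k}$: using $T_{\phi}^{*}T_{\phi}=I$ and $T_{\phi}^{*}A_{0}=0$ (the $\ell^{2}(I)$-valued constants being orthogonal to the range $\phi H^{2}(\bb D,\ell^{2}(I))$ of $T_{\phi}$) one obtains $T_{\phi}^{*}f_{h}=\sum_{k\ge1}\phi^{k-1}A_{k}=\sum_{j\ge0}\phi^{j}A_{j+1}$, and since $A_{j+1}$ is the coefficient vector of $QR^{j+1}h=QR^{j}(Rh)$ this is precisely $f_{Rh}$, which belongs to $\cl N$ because $Rh\in\cl M$.

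The main obstacle is the middle step: upgrading the purely formal series produced by Lemma \ref{rkhs-l} to a bona fide element $f_{h}\in H^{2}(\bb D,\ell^{2}(I))$ with $Gf_{h}=h$. This is where every facet of the hypothesis ``$\phi$ inner'' is used — the isometry of $T_{\phi^{k}}$ and $\phi(0)=0$ for the orthogonality that promotes the norm inequality to genuine convergence of $\sum_{k}\phi^{k}A_{k}$, and $|\phi|<1$ on $\bb D$ to annihilate the tail $\phi^{m+1}R^{m+1}h$ — and it is also the reason $\cl N$ must be taken to be the range of the section $h\mapsto f_{h}$ rather than the naive set $\{f:Gf\in\cl M\}$, which in general (as soon as $\cl H\ominus\phi\cl H$ has dimension larger than one) is not mapped bijectively onto $\cl M$ by $\cl G$.
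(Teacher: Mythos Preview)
Your proof is correct and follows the same skeleton as the paper's: both apply Lemma \ref{rkhs-l} with $T=M_\phi$ to obtain the decomposition $h=\sum_{k=0}^m\phi^kQR^kh+\phi^{m+1}R^{m+1}h$, both exploit that $\phi$ is inner with $\phi(0)=0$ to place the resulting series in $H^2$, and both finish by checking $T_\phi^*$-invariance. There are, however, two tactical differences worth noting. First, to conclude $h=Gf_h$ you let $m\to\infty$ pointwise and kill the tail using $|\phi(z)|<1$ together with the uniform bound $|(R^{m+1}h)(z)|\le\|k_z\|_{\cl M}\|h\|_{\cl M}$; the paper instead observes that $h-\sum_{i}g_if_i$ has a zero of every order at $0$ (since $\phi^{m+1}$ vanishes to order at least $m+1$ there) and invokes the identity theorem. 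Second, you define $\cl N$ as the image of the section $h\mapsto f_h$, which makes bijectivity automatic; the paper instead sets $\cl N=\{f=(f_i):f_i\in C_\phi(H^2(\bb D)),\ Gf\in\cl M\}$ and then proves injectivity by showing that the constraint $f_i\in C_\phi(H^2(\bb D))$ forces $\langle f_i,\phi^k\rangle=\langle QR^kh,g_i\rangle$ for all $k$, so that any such $f$ equals $f_{Gf}$. The two $\cl N$'s coincide, and your route is somewhat more economical; the paper's description has the minor advantage of giving an intrinsic characterization of $\cl N$ that does not reference the map $h\mapsto f_h$.
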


\begin{proof} Let $P$ denote the orthogonal projection of $\cl M$ onto it's closed subspace $\cl M\cap \phi \cl H$ and $Q=I_{\cl M} - P.$ 
Let $h\in \cl M$. Then using Lemma \ref{rkhs-l}, 
\begin{equation}\label{rkhs5}
 h=\sum_{k=0}^mM_{\phi}^m QR^m h+M^{m+1}_{\phi}R^{m+1}h \ \ \ {\rm for \ every} \ m\ge 0,  
\end{equation}
and 
\begin{equation}\label{rkhs6}
\sum_{m=0}^\infty ||Q_{\cl M}R^mh||_{\cl M}^2\le ||h||_{\cl M}^2,
\end{equation}
where $R:=(M_{\phi}M_{\phi}^*)^{-1}M_{\phi}P$ is a contraction on $\cl M.$ 

Since $\{g_i: i\in I\}$ is an orthonormal basis of $\cl M\ominus (\cl M\cap \phi \cl H)$, therefore for every $k\ge 0,$ we have  
$$
 QR^{k}h=\sum_{i \in I}c_{ki}g_i 
 $$
 for some $\{c_{ki}\}_{i\in I}\in \ell^2(I).$ 
 
 Then 
$$
 h=\sum_{k=0}^{m}\sum_{i \in I}c_{ki}M^{k}_{\phi }g_{i} + M^{m+1}_{\phi}R^{m+1}h
$$
and 
$$
 \sum_{i \in I}\sum_{k=0}^{\infty}  |c_{ki}|^{2}\le \|h\|^{2}_{\mathcal{M}}.
$$
Thus for every $i \in  I, \ q_i(z):=\sum_{k=0}^{\infty}c_{ki}z^k$ is in $H^2(\bb D).$ Further, since $\phi$ is an inner function with $\phi(0)=0,$ therefore the composition operator $C_{\phi}$ induced by $\phi$ is an isometry on $H^2(\bb D).$ Thus, $f_{i}=C_{\phi}(q_i)=\sum_{k=0}^\infty c_{ki}\phi^k$ belongs to $C_\phi(H^2(\bb D))$ and 
$||f_i||_2^2=\sum_{k=0}^\infty |c_{ki}|^2.$  

Then, for any $w \in \mathbb{D}$, 
\begin{eqnarray*}
&& \sum_{i \in I}|(g_i f_i)(w)| \\&\le & \Bigg(\sum_{i \in I}|g_i(w)|^2\Bigg)^{1/2}\Bigg(\sum_{i \in I}|f_i(w)|^2\Bigg)^{1/2}\\
 & \le& \Bigg(\sum_{i \in I}|<g_i,k_w>_\mathcal{M}|^2\Bigg)^{1/2}\Bigg(\sum_{i \in I}\Bigg(\sum_{k=0}^{\infty}|c_{ki}|^2\Bigg)\Bigg(\sum_{k=0}^{\infty}|\phi(w)|^{2k}\Bigg)\Bigg)^{1/2}\\
 &\le& \|Qk_w\|_\mathcal{M}\|h\|_\mathcal{M}\frac{1}{\sqrt{1-|\phi(w)|^2}},
\end{eqnarray*}
where $k_{w}$ is the kernel function of $\cl M$ at the point $w.$ This shows that the series $\sum_{i \in I}g_{i}f_{i}$ converges at each point in $\mathbb{D}$.   

We shall now prove that $\sum_{i \in I}g_{i}f_{i}$ is analytic on $\mathbb{D}$. Suppose $z_0 \in \mathbb{D}$ and choose $r>0$ such that $ \overline{D(z_0,r)}$ $\subset \mathbb{D}$. Let $w \in \overline{D(z_0,r)}$. Since the kernel function $K$ of $\cl M$ is analytic in the first variable and coanalytic in the second variable, therefore $K$ is bounded on compact subsets of $\bb D^2.$ Thus, there exists a constant $A>0$, depending on $z_0$ and $r,$ such that $\|k_w\|_{\cl M}^2=K(w,w)\le A$. Also, $\sup_{|w-z_0|\le r}|\phi(w)| \le B <1$, where $B$ depends on $z_0$ and $r$. Therefore,
$$
\sum_{i \in I}|(g_i f_i)(w)| \le \frac{A}{\sqrt{1-B}}\Bigg( \sum_{i \in I}\sum_{k=0}^{\infty}|c_{ki}|^2\Bigg)^{1/2}
$$
This also implies that $\{i\in I: f_i(z)g_i(z)\ne 0\}$ must be countable which means we can assume the above sum on the left must be a countable sum. Then using 
the Weierstrass M-test the series converges uniformly on $\overline{D(z_0,r)}$. Thus the series $\sum_{i\in I}g_i f_i$ converges locally uniformly on $\mathbb{D}$. Hence 
$\sum_{i\in I}(g_if_i)$ is analytic on 
$\mathbb{D}$.  

Further, using Equation (\ref{rkhs5}),  
$h - \sum_{i \in I}g_{i}f_{i}$ is an analytic function on $\mathbb{D}$ having zero of every order at 0. Hence 
\begin{equation}\label{rkhs7}
    h(z)=\sum_{i \in I}g_{i}(z)f_{i}(z) \ \ \text{ for \ every} \ z\in\mathbb{D},
\end{equation}
$f_i\in C_{\phi}(H^2(\bb D))$ for each $f_i\in I$
and 
$$
\sum_{i\in I}||f_i||_2^2=\sum_{i\in I}\sum_{k=0}^\infty |c_{ki}|^2\le ||h||_{\cl M}^2.
$$  

Now define 
\begin{eqnarray*}
\mathcal{N}=\{f = (f_i)_{i\in I}\in H^{2}(\mathbb{D},\ell^2(I)) &:& f_i\in C_\phi(H^2(\bb D)), \  \exists \ h \in \mathcal{M}, \\
&& h(z)=\sum_{i\in I}g_i(z)f_i(z) \ {\rm for} \ z\in \bb D\}.
\end{eqnarray*}

Clearly $\mathcal{N}$ is a vector subspace of $H^{2}(\mathbb{D},\ell^2(I)$ and the map $\cl G(f)(z)=\sum_{i\in I}g_i(z)f_i(z), z\in \bb D,$ is a well-defined linear surjective map. 
To show it is one-to-one, we shall show that every $f\in \cl N$ is uniquely determined by $\cl Gf.$ Let $f=(f_i)_{i\in I}\in \cl N.$ Then there exists $h\in \cl M$ such that $h=\sum_{i\in I}g_if_i.$ Let $f_i=\sum_{k=0}^\infty a_{ki}\phi^k.$ Then 
$h=\sum_{i\in I}c_{0i}g_i+\phi \left(\sum_{i\in I}g_i\tilde{f_i}\right),$ where $\tilde{f_i}=\sum_{k=0}^\infty c_{(k+1)i}\phi^k.$ Then $\phi\left(\sum_{i\in I}g_i\tilde{f_i}\right)\in \cl H$ 
which yields $\sum_{i\in I}g_i\tilde{f_i}\in \cl H.$ Thus, $h-\sum_{i\in I}c_{0i}g_i\in \cl M\cap \phi \cl H.$ Therefore, $Q(h)=\sum_{i\in I}c_{0i}g_i.$ This means, 
$$
\langle{f_i,1}\rangle=c_{0i}=\langle{Qh,g_i}\rangle.
$$
for each $i.$ Now, $Ph=\phi\left(\sum_{i\in I}g_i\tilde{f_i}\right)$ and $P=M_\phi R.$ Therefore, $Rh=\sum_{i\in I}g_i\tilde{f_i}.$ Again, repeating the above arguments, 
$QRh=\sum_{i\in I}c_{1i}g_i$ which implies 
$$
\langle{f_i,\phi}\rangle=c_{1i}=\langle{QRh,g_i}\rangle.
$$ 

Continuing like this, we ontain 
$$
\langle{f_i,\phi^k}\rangle=\langle{QR^kh,g_i}\rangle.
$$ 
This establishes the claim.

Lastly, we shall show that $\mathcal{N}$ is invariant under $T^*_\phi$. 
Let $f=(f_i)_{i\in I}\in \cl N.$ Then by definition, for each $i, \ f_i\in C_\phi(H^2(\bb D))$ and there exists an $h\in \cl M$ such that $h(z)=\sum_{i\in I}g_i(z)f_i(z)$ for every $z\in \bb D.$ We decompose $h$ as 
$$
h=Qh+ Ph= Q+M_\phi Rh,
$$
since $P=M_{\phi}R.$ Then 
$$
h=\sum_{i\in I}g_if_i = \sum_{i\in I}c_{0i}g_i+M_\phi R(\sum_{i\in I}g_if_i),
$$
where for each $i\in I, \ f_i=\sum_{k=0}^\infty c_{ki}\phi^k$ which implies
$$
  M_\phi R(h)=\sum_{i\in I}g_i(f_i-c_{0i})
$$
and therefore 
$$
 R(h)=\sum_{i\in I}g_iT_{\phi}^*(f_i).
$$
Hence, $T^*_\phi(f)=(T_{\phi}^* f_i)_{i\in I} \in \mathcal{N}$ which establishes that $\mathcal{N}$ is invariant under $T^*_\phi$.
\end{proof}

\begin{remark} 
In \cite{LP}, Liang and Partington used Erard's methods from \cite{Era} to describe subspaces of Dirichet-type spaces $\cl D_{\alpha} (-1\le \alpha \le 1)$ that are nearly invariant under division by a finite Blaschke factor. For $\alpha\ge 0,$ $\cl D_{\alpha}$ posses an equivalent norm with respect to which $M_{\phi}$ is bounded below on it with a lower bound 1. Hence, our Theorem extends Theorem 3.4 from \cite{LP} to a vastly general situation.
\end{remark}

\section{nearly invariant Brangesian subspaces with finite defect related to multipication operators on reproducing kernel Hilbert spaces}\label{finite}
Chalendar, Gallarado-Gutierrez, and Partington introduced and studied the notion of nearly $S^*$-invariant subspaces of $H^2(\bb D)$ with finite defect in \cite{CGP}. 
In this Section we shall extend our work from Section \ref{rkhs} to the finite defect case. This extension is motivated by work of Chattopadhyay and Das from \cite{CD}. They, following Erard's ideas, as discussed in Section 4, extended Liang and Partington's description \cite{LP} of nearly $S^*$-invariant subspaces of Dirichlet-type spaces to the finite defect situation. We shall first introduced some definitions and terminologies that we need in this section. 

Let $\cl H$ be a Hilbert space. Suppose $\cl M$ is a vector subspace of $\cl H$ which is a Hilbert space (with maybe a different norm) and $\cl F$ is a closed subspace of 
$\cl H$ such that $\cl M\cap \cl F=\{0\}.$ Then the vector subspace $\cl M+\cl F$ of $\cl H$ becomes a Hilbert space with respect to the norm given by  
$$
||h+f||_{\oplus}^2=||h||^2_{\cl M}+||f||_{\cl H}^2; \ \ h\in \cl M,  \ f\in \cl F. 
$$
Furthermore, $\cl M$ and $\cl F$ are closed orthogonal subspaces of $(\cl M+\cl F, ||\cdot||_{\oplus})$. Henceforth, we shall use  
$\cl M\oplus \cl F$ to denote $(\cl M+\cl F, ||\cdot||_{\oplus})$.  

The following result is an anlogue of Lemma 2.1 from \cite{Era} and our Lemma \ref{rkhs-l} for the finite defect situation. 

\begin{lemma}\label{defect-l} Let $\cl H$ be a Hilbert space and $T\in B(\cl H)$ with $||Th||_{\cl H}\ge ||h||_{\cl H}$ for all $h\in \cl H.$ Let 
$\mathcal{M}$ be a Hilbert space contractively contained in $\cl H$ for which there exists a finite dimensional subspace $\cl F$ of $\cl H$ 
such that $\cl M\cap \cl F=\{0\}$, $Th\in \cl M$ implies $h\in \cl M\oplus\cl F$, and $||Th||_{\cl M}\ge ||h||_{\oplus}.$ If $P$ and $L$, 
respectively, are the 
orthogonal projections of $\cl M\oplus \cl F$ onto $\cl M\cap T\cl H$ and $\cl F,$ and $Q=I_{\cl M\oplus \cl F} - P,$ then 
$R:=(T^*T)^{-1}T^*P$ is a well-defined 
contraction on $\cl M\oplus \cl F$. Further, for each $m\ge 0,$ every $h\in \cl M$ can be written as  
$$
h=\sum_{k=0}^{m}T^kQR^kh + T^{m+1}R^{m+1}h + T\sum_{k=1}^{m}T^{k-1}LR^kh 
$$
and 
$$ \|h\|^2_\mathcal{M}\ge \sum_{k=0}^m\|QR^kh\|^2_{\cl M} + \sum_{k=1}^m\|LR^kh\|^2_\mathcal{H}$$\\
\end{lemma}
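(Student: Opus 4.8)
The plan is to mirror the proof of Lemma \ref{rkhs-l}, now dragging along the finite-dimensional space $\cl F$. First I would settle well-definedness. Since $\|Th\|_{\cl H}\ge\|h\|_{\cl H}$ on all of $\cl H$, the operator $T$ is bounded below, so $T\cl H$ is closed in $\cl H$ and $T^*T\ge I_{\cl H}$ is boundedly invertible; hence $(T^*T)^{-1}T^*P$ is bounded once $P$ is available. That $\cl M\cap T\cl H$ is closed in $\cl M$, and therefore (as $\cl M$ is closed in $\cl M\oplus\cl F$) closed in $\cl M\oplus\cl F$, follows from $\cl M$ being contractively contained in $\cl H$ together with the elementary lemma just after Lemma \ref{ni1}; and $\cl F$, being finite dimensional, is closed in $\cl M\oplus\cl F$. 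So the orthogonal decomposition $\cl M\oplus\cl F=(\cl M\cap T\cl H)\oplus\bigl(\cl M\ominus(\cl M\cap T\cl H)\bigr)\oplus\cl F$ holds, with $P$, $Q$ (the projection onto the middle summand), and $L$ all well defined; moreover the $\oplus$-norm restricts to $\|\cdot\|_{\cl M}$ on $\cl M$ and to $\|\cdot\|_{\cl H}$ on $\cl F$.

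Next I would show $R:=(T^*T)^{-1}T^*P$ maps $\cl M\oplus\cl F$ into itself and is a contraction there. For $x\in\cl M\oplus\cl F$ write $Px=Tx_0$ (possible since $Px\in\cl M\cap T\cl H\subseteq T\cl H$); then $Rx=(T^*T)^{-1}T^*Tx_0=x_0$, so $TRx=Px\in\cl M$, whence the defect hypothesis ``$Th\in\cl M\Rightarrow h\in\cl M\oplus\cl F$'' gives $Rx\in\cl M\oplus\cl F$. Applying $\|Th\|_{\cl M}\ge\|h\|_\oplus$ to $h=Rx$ and using $\|P\|\le 1$ on $\cl M\oplus\cl F$ yields $\|Rx\|_\oplus\le\|TRx\|_{\cl M}=\|Px\|_{\cl M}=\|Px\|_\oplus\le\|x\|_\oplus$, so $R$ is a contraction; this is exactly the argument of Lemma \ref{rkhs-l}.

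Then comes the iteration. For $x\in\cl M\oplus\cl F$ the three-fold splitting reads $x=TRx+Qx+Lx$ with $TRx=Px\in\cl M\cap T\cl H$, $Qx\in\cl M\ominus(\cl M\cap T\cl H)\subseteq\cl M$, $Lx\in\cl F$, and Pythagoras gives $\|x\|_\oplus^2=\|TRx\|_{\cl M}^2+\|Qx\|_{\cl M}^2+\|Lx\|_{\cl H}^2$; applying $\|T(\cdot)\|_{\cl M}\ge\|\cdot\|_\oplus$ to $Rx$ turns this into $\|x\|_\oplus^2\ge\|Rx\|_\oplus^2+\|Qx\|_{\cl M}^2+\|Lx\|_{\cl H}^2$. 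Feeding $x=h,Rh,\dots,R^mh$ successively into $x=TRx+Qx+Lx$ (each iterate stays in $\cl M\oplus\cl F$ by the previous step) makes the partial sums collapse to
$$
h=\sum_{k=0}^{m}T^kQR^kh+T^{m+1}R^{m+1}h+T\sum_{k=1}^{m}T^{k-1}LR^kh ,
$$
the $k=0$ defect term dropping out because $h\in\cl M$ forces $Lh=0$. Telescoping the norm estimate, using $\|h\|_\oplus=\|h\|_{\cl M}$ and $Lh=0$, and discarding the nonnegative tail $\|R^{m+1}h\|_\oplus^2$, gives $\|h\|_{\cl M}^2\ge\sum_{k=0}^m\|QR^kh\|_{\cl M}^2+\sum_{k=1}^m\|LR^kh\|_{\cl H}^2$.

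The one genuinely new point relative to Lemma \ref{rkhs-l} --- and the place that needs care --- is the bookkeeping across iterations: although $h$ itself has no component in $\cl F$, the iterates $R^kh$ for $k\ge1$ do, and it is precisely these components $LR^kh$, advanced one step by $T$, that produce the extra summand $T\sum_{k\ge1}T^{k-1}LR^kh$ that is absent from the defect-free lemma. Beyond verifying that this substitution reassembles exactly into the displayed closed form with $T^{m+1}R^{m+1}h$ as the honest remainder, and that no norm mismatch ($\cl M$ versus $\cl H$ versus $\oplus$) slips into the Pythagorean accounting, I anticipate no serious obstacle.
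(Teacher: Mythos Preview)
Your proposal is correct and follows essentially the same route as the paper's proof: show $TRx=Px$ to get $R$ well-defined and contractive on $\cl M\oplus\cl F$, then iterate the three-way splitting $x=TRx+Qx+Lx$ starting from $h\in\cl M$ (where $Lh=0$), telescoping both the identity and the Pythagorean inequality. Your explicit reading of $Q$ as the projection onto the middle summand $\cl M\ominus(\cl M\cap T\cl H)$, rather than literally $I-P$, is exactly what the paper's own proof uses when it writes $Rh=P(Rh)+Q(Rh)+L(Rh)$, and is the only reading under which the displayed formula is consistent.
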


\begin{proof} For $g\in \mathcal{M} \oplus \mathcal{F},$
\begin{eqnarray*}
TRg&=&T(T^*T)^{-1}T^*P(g)\\
&=&T(T^*T)^{-1}T^*Th_0,  \text{ where } Pg=Th_0 \ \ {\rm for \ some} \ h_0\in\mathcal{H} \ \\
&=&Th_0\\ 
&=& Pg.
\end{eqnarray*}
Thus $TRg\in \cl M$, which implies $Rg\in \cl M\oplus \cl F.$ Also, $||Rg||_{\oplus}\le ||TRg||_{\cl M}=||Pg||_{\cl M}\le ||Pg||_{\oplus}\le ||g||_{\oplus}.$ Therefore $R$ is a 
well-defined contraction on $\cl M\oplus \cl F.$  

Fix any $h \in \mathcal{M}.$ Then  
\begin{equation}\label{defect1}
h=Ph + Qh=TRh + Qh    
\end{equation}
and 
\begin{equation}\label{defect2}
\|h\|^2_\mathcal{M}=\|TRh\|^2_\mathcal{M}+ \|Qh\|^2_\mathcal{M} \ge \|Rh\|^2_{\oplus}+ \|Qh\|^2_\mathcal{M}.     
\end{equation}

Since $Rh \in \mathcal{M} \oplus \mathcal{F}$, therefore we can decompose it as 
$$
Rh=P(Rh)+Q(Rh)+L(Rh)=TR^2h+QRh+LRh
$$
and 
$$||Rh||_{\oplus}^2=||TR^2h||_{\cl M}^2+||QRh||_{\cl M}^2+||LRh||_{\cl H}^2.$$ 

Using these in Equations (\ref{defect1}) and (\ref{defect2}), we obtain  

$$
h=Qh+TQRh+T^2R^2h+TLRh
$$
and 
$$
||h||_{\cl M}^2\ge  \sum_{k=0}^1||QR^kh||_{\cl M}^2+||R^2h||_{\oplus}^2+||LRh||_{\cl H}^2. 
$$

Continuing like this we can show that 
$$
h=\sum_{k=0}^mT^kQR^kh+T^{m+1}R^{m+1}h+\sum_{k=1}^mT^kLR^kh
$$
and 
$$
||h||_{\cl M}^2\ge \sum_{k=0}^m||QR^kh||_{\cl M}^2+||R^{m+1}h||_{\oplus}^2+\sum_{k=1}^m||LR^kh||_{\cl H}^2.
$$
for every $m\ge 0.$
This completes the proof.
\end{proof}

\begin{defn} Let $\cl H$ be an RKHS on a set $X$, $\phi$ be a complex-valued function on $X$ such that $\phi \cl H\subseteq \cl H$ and $M_\phi,$ the operator of multiplication with $\phi$ is bounded below on $\cl H.$ Then a vector subspace $\cl M$ of $\cl H$ is said to be nearly 
invariant under division by $\phi$ with defect $p$ if there exists a $p$-dimensional subspace $\cl F$ (which can assumed to have zero intersection with $\cl M)$ of 
$\cl H$ such that $\phi f\in \cl M$ implies 
$f\in \cl M\oplus\cl F$ (algebraic direct sum). The subspace $\cl F$ (unique upto isomorphism) is said to be the defect space of $\cl M.$   
\end{defn}
 
The following is the main theorem of this section. It an extension of our Theorem \ref{MTRKHS} for the finite defect case. 
 
\begin{thm}\label{RKHS-defect} Let $\mathcal{H}$ be an RKHS consisting of analytic functions on $\bb D.$ Let $\phi$ be an inner function such that $\phi(0)=0, \ \phi \cl H\subseteq \cl H,$ and $||h||\le ||\phi h||$ for every $h\in \cl H$. Assume that if $\phi h\in \cl H$ for an analytic function $h$ on $\bb D,$ then $h\in \cl H.$
Let $\cl M$ be a Hilbert space contractively contained in $\cl H$ which is nearly invariant under division by $\phi$ with defect space $\mathcal{F}$ of dimension $p$  such that $\|\phi h\|_\mathcal{M}\ge \|h\|_{\oplus}$ whenever $\phi h \in \mathcal{M}$. 
\begin{enumerate}
\item If $\mathcal{M}\nsubseteq \phi \mathcal{H},$ then there exists a vector subspace $\mathcal{N}$ of $H^{2}(\mathbb{D},\ell^2(I)\oplus \bb C^p)$ invariant under $T^{*}_{\phi}$ such that $\cl M$ is in one-to-one correspondence with $\cl N$ via the linear map $\cl G:\cl N \to \cl M$ given by 
$$
(\cl Gq)(z)=\sum_{i\in I}g_i(z)f_i(z)+\phi(z)\sum_{i=1}^p e_i(z)t_i(z) \ \ ({\rm pointwise}),
$$
where $q=(f,t)\in \cl N$, and $\{g_i:i\in I\}$ and $\{e_i:i=1,\dots, p\}$ are orthonormal basis of $\cl M\ominus \cl M\cap \phi \cl H$ and $\cl F,$ respectively. Moreover, 
$$
||h||_{\cl M}^2\ge ||(f,t)||_2^2=||f||_2^2+||t||_2^2 
$$
for $h \in \mathcal{M}$, where   
$
h=\cl G(f,t).
$

\item If $\mathcal{M} \subseteq \phi \mathcal{H},$
then there exists a vector subspace $\mathcal{N}$ of $H^{2}(\bb D, \bb C^p)$ invariant under $T^{*}_{\phi}$ such that $\cl M$ is in one-to-one correspondence with $\cl N$ via the linear map $\cl G:\cl N \to \cl M$ given by 
$$
\cl G(t)(z)=\phi(z)\sum_{i=1}^p e_i(z)t_i(z) \ \ ({\rm pointwise}),
$$
where $t\in \cl N$ and $\{e_i:i=1,\dots, p\}$ is an orthonormal basis of $\cl F$. Moreover, 
$$
||h||_{\cl M}\ge ||t||_2 
$$
for $h \in \mathcal{M}$, where   
$
h=\cl G(t).
$

\end{enumerate}
\end{thm}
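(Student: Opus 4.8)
The plan is to repeat the argument of Theorem~\ref{MTRKHS} with Lemma~\ref{defect-l} replacing Lemma~\ref{rkhs-l}, reading the two alternatives of the statement as the cases $\cl M\ominus(\cl M\cap\phi\cl H)\ne\{0\}$ (part~(1), which is exactly $\cl M\nsubseteq\phi\cl H$) and $\cl M\ominus(\cl M\cap\phi\cl H)=\{0\}$ (part~(2)).

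First I would check that Lemma~\ref{defect-l} applies with $T=M_\phi$: the hypothesis $\|h\|\le\|\phi h\|$ makes $M_\phi$ bounded below on $\cl H$, so $M_\phi^*M_\phi$ is invertible and $\phi\cl H$ is closed in $\cl H$, whence $\cl M\cap\phi\cl H$ is closed in $\cl M$; the remaining hypotheses of Lemma~\ref{defect-l} are precisely ``$\cl M$ nearly invariant under division by $\phi$ with defect space $\cl F$'' together with $\|\phi h\|_\cl M\ge\|h\|_\oplus$. Lemma~\ref{defect-l} then furnishes a contraction $R=(M_\phi^*M_\phi)^{-1}M_\phi^*P$ on $\cl M\oplus\cl F$ with $M_\phi R=P$, and, for each $h\in\cl M$ and $m\ge0$,
\[ h=\sum_{k=0}^m\phi^k QR^kh+\phi^{m+1}R^{m+1}h+\sum_{k=1}^m\phi^k LR^kh ,\qquad \|h\|_\cl M^2\ge\sum_{k=0}^m\|QR^kh\|_\cl M^2+\sum_{k=1}^m\|LR^kh\|_\cl H^2 , \]
where (as in the proof of Lemma~\ref{defect-l}) $Q$ projects $\cl M\oplus\cl F$ onto $\cl M\ominus(\cl M\cap\phi\cl H)$ and $L$ onto $\cl F$. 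Writing $QR^kh=\sum_{i\in I}c_{ki}g_i$ and $LR^{k+1}h=\sum_{i=1}^p\delta_{ki}e_i$ in the given orthonormal bases, the inequality makes each $q_i(z)=\sum_kc_{ki}z^k$ and $s_i(z)=\sum_k\delta_{ki}z^k$ lie in $H^2(\bb D)$; since $C_\phi$ is isometric on $H^2(\bb D)$ ($\phi$ inner with $\phi(0)=0$), I put $f_i=C_\phi q_i=\sum_kc_{ki}\phi^k$ and $t_i=C_\phi s_i=\sum_k\delta_{ki}\phi^k$, so $\sum_i\|f_i\|_2^2+\sum_i\|t_i\|_2^2\le\|h\|_\cl M^2$. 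The series $\sum_ig_if_i$ converges locally uniformly to an analytic function on $\bb D$ by the Cauchy--Schwarz / reproducing-kernel / Weierstrass-$M$-test estimate used in Theorem~\ref{MTRKHS} (the finite sum $\phi\sum_{i=1}^pe_it_i$ is trivially analytic), and letting $m\to\infty$ in the decomposition --- the tail $\phi^{m+1}R^{m+1}h$ tending to $0$ pointwise because $|\phi(w)|<1$ and $R$ is a contraction --- gives $h=\sum_{i\in I}g_if_i+\phi\sum_{i=1}^pe_it_i$ pointwise on $\bb D$, with the asserted norm inequality. Part~(2) is the degenerate instance: $\cl M\subseteq\phi\cl H$ forces $Q=0$, the $g_i$-terms disappear, and the same computation gives $h=\phi\sum_{i=1}^pe_it_i$.

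Then I would define $\cl N$ as in the statement --- all $q=(f,t)$ in $H^2(\bb D,\ell^2(I)\oplus\bb C^p)$ (resp.\ $t$ in $H^2(\bb D,\bb C^p)$ for part~(2)) with coordinates in $C_\phi(H^2(\bb D))$ for which some $h\in\cl M$ equals $\cl Gq$ --- so that by the previous paragraph $\cl G\colon\cl N\to\cl M$ is well-defined, linear and onto. Injectivity would follow the pattern of Theorem~\ref{MTRKHS}: splitting off one power of $\phi$, $h=\sum_ic_{0i}g_i+\phi\bigl(\sum_ig_i\tilde f_i+\sum_ie_it_i\bigr)$, and since $\sum_ic_{0i}g_i\in\cl M$ the bracket lies in $\cl H$ by the division hypothesis, so $h-\sum_ic_{0i}g_i\in\cl M\cap\phi\cl H$, which forces $Qh=\sum_ic_{0i}g_i$ (because $\sum_ic_{0i}g_i\perp\cl M\cap\phi\cl H$ and $h\perp\cl F$); then $M_\phi R=P$ gives $Rh=\sum_ig_i\tilde f_i+\sum_ie_it_i$, and iterating the same step on $Rh\in\cl M\oplus\cl F$ recovers $c_{ki}=\langle QR^kh,g_i\rangle$ and $\delta_{ki}=\langle LR^{k+1}h,e_i\rangle$, so $q$ is determined by $h$. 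For $T_\phi^*$-invariance I would use $T_\phi^*1=0$ and $T_\phi^*\phi^k=\phi^{k-1}$ on $C_\phi(H^2(\bb D))$: rewriting $Rh=\sum_ig_iT_\phi^*f_i+\sum_ie_it_i=\bigl(\sum_ig_iT_\phi^*f_i+\phi\sum_ie_iT_\phi^*t_i\bigr)+\sum_it_i(0)e_i$ displays $\cl G(T_\phi^*q)$ as the $\cl M$-component of $Rh$, hence $\cl G(T_\phi^*q)\in\cl M$ and $T_\phi^*q\in\cl N$; part~(2) is identical with the $g_i$-terms absent.

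The genuinely new difficulty --- the single place where defect behaves differently from Theorem~\ref{MTRKHS} --- is the middle of the injectivity and invariance steps: one must know that when $Rh\in\cl M\oplus\cl F$ is written as $\sum_ic_{1i}g_i+\sum_i\delta_{0i}e_i+\phi w$ with $w\in\cl H$, these three summands really are its $\cl M\ominus(\cl M\cap\phi\cl H)$-, $\cl F$-, and $\cl M\cap\phi\cl H$-components --- equivalently, that an element of $\cl M\oplus\cl F$ lying in $\phi\cl H$ already lies in $\cl M$. This is where the transversality of the defect space $\cl F$ to $\cl M+\phi\cl H$, built into the minimal-defect construction exactly as in the work of Chattopadhyay and Das \cite{CD}, must be invoked, and I expect this bookkeeping to be the main obstacle; everything else reduces to the template already established for Theorem~\ref{MTRKHS}.
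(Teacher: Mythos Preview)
Your overall plan matches the paper's: apply Lemma~\ref{defect-l} with $T=M_\phi$, expand $QR^kh$ and $LR^kh$ in the chosen bases, push the coefficients through $C_\phi$, and then set up $\cl N$ and the map $\cl G$ exactly as in Theorem~\ref{MTRKHS}. The convergence estimates, the analyticity argument, the identification $h=\sum_ig_if_i+\phi\sum_je_jt_j$, and the reduction of part~(2) to the case $Q=0$ are all handled the same way in the paper. Your pointwise-limit argument for the tail $\phi^{m+1}R^{m+1}h$ is fine; the paper instead observes that the difference has a zero of every order at $0$, which is equivalent here since $\phi(0)=0$.

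The one substantive divergence is precisely the point you flagged. You define $\cl N$ as in Theorem~\ref{MTRKHS} --- all $(f,t)$ with coordinates in $C_\phi(H^2(\bb D))$ for which $\cl G(f,t)\in\cl M$ --- and then try to prove injectivity by iterating on $Rh$. You correctly see that this needs $(\cl M\oplus\cl F)\cap\phi\cl H\subseteq\cl M$, i.e.\ transversality of $\cl F$ to $\cl M+\phi\cl H$, which the stated hypotheses do \emph{not} give (only $\cl F\cap\cl M=\{0\}$ is assumed). The paper does not resolve this by importing transversality from \cite{CD}; instead it \emph{changes the definition of $\cl N$}. In the paper, $\cl N$ consists of those $(f,t)$ with coordinates in $C_\phi(H^2(\bb D))$ for which there exists $h\in\cl M$ satisfying both the representation $h=\cl G(f,t)$ \emph{and} the coefficient identities
\[
\langle f_i,\phi^k\rangle=\langle QR^kh,g_i\rangle,\qquad \langle t_j,\phi^k\rangle=\langle LR^{k+1}h,e_j\rangle\quad(k\ge0).
\]
With this definition, injectivity of $\cl G$ is automatic (each $h$ determines its $(f,t)$ by construction), and in the $T_\phi^*$-invariance step the identity $\langle t_j,1\rangle=\langle LRh,e_j\rangle$ lets you read off $L(Rh)$ directly from the constant terms of the $t_j$, so the decomposition of $Rh$ matches the algebraic splitting without any transversality assumption. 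The witness for $T_\phi^*(f,t)$ is then $(P+Q)(Rh)\in\cl M$, and the coefficient identities for it follow from $R(P+Q)=R$ and $Q(P+Q)=Q$ on $\cl M\oplus\cl F$. So the ``main obstacle'' you anticipated is real for your definition of $\cl N$, but the paper dissolves it by bookkeeping rather than by an extra hypothesis.
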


\begin{proof} Let $h\in \cl M.$ Then, using Lemma \ref{defect-l} for $m\ge 0,$
\begin{equation}\label{MT-def1}
h=\sum_{k=0}^m M_{\phi}^mQR^m h+ M^{m+1}_{\phi}R^{m+1}+ M_{\phi}\sum_{k=1}^m M_{\phi}^{k-1}LR^mh
\end{equation}
and 
\begin{equation}\label{MT-def2}
||h||_{\cl M}^2\ge \sum_{k=0}^\infty ||QR^kh||_{\cl M}^2 + \sum_{k=1}^\infty ||LR^kh||_{\cl H},
\end{equation}
where $Q$ and $L$ are the projections of $\cl M\oplus \cl F$ onto $\cl M\ominus (\cl M\cap \phi \cl H)$ and $\cl F$, respectively. Recall that $\cl M\oplus \cl F$ is the Hilbert space $(\cl M+\cl F, ||\cdot||_{\oplus})$, where $||a + b||_{\oplus}^2=||a||_{\cl M}^2+||b||_{\cl H}^2$ for $a\in \cl M$ and $b\in \cl F$.  

Let $\{g_i:i\in I\}$ and $\{e_i:i=1,\dots, p\}$ be  orthonormal basis of $Ran(Q)$ and $Ran(L),$ respectively. Then 
$$
 QR^{k}h=\sum_{i \in I}c_{ki}g_i \text{ and } LR^kh=\sum_{j=1}^{p}d_{kj}e_j  
 $$
for $\{c_{ki}\}_{i\in I}\in \ell^2(I)$ and $\{d_{kj}\}_{j=1}^p\in \bb C^p.$
Using these represenations in Equation (\ref{MT-def1}), we obtain  
$$
h=\sum_{k=0}^{m}\sum_{i \in I}c_{ki}M^{k}_{\phi }g_{i} + M^{m+1}_{\phi}R^{m+1}h +M_\phi\sum_{k=1}^{m} \sum_{j=1}^{p}d_{kj}M^{k-1}_\phi e_j
$$
and 
$$
 \sum_{i \in I}\sum_{k=0}^{\infty}  |c_{ki}|^{2} + \sum_{j=1}^{p}\sum_{k=1}^{\infty}  |d_{kj}|^{2}\le \|h\|^{2}_{\mathcal{M}}.
$$

Thus for every $i \in  I$ and $j\in \{1,2,\dots,p\}$, $f_i=\sum_{k=0}^{\infty}c_{ki}\phi^{k}$ and  $t_j=\sum_{k=1}^{\infty}d_{kj}\phi^{k-1}$ are well-defined functions in 
$C_{\phi}(H^2(\bb D))$ and $\sum_{i\in I}||f_i||_2^2+\sum_{j=1}^p||t_j||_2^2\le ||h||_{\cl M}^2.$ Then using the arguments simiar to the ones used in the proof of 
Theorem \ref{MTRKHS}, we first show that for each $w\in \bb D,$ 

\begin{equation}\label{MT-def3}
\sum_{i \in I}|(g_i f_i)(w)| \le \|Qk_w\|_\mathcal{M}\|h\|_\mathcal{M}\frac{1}{\sqrt{1-|\phi(w)|^2}}
\end{equation}

and 
\begin{equation}\label{MT-def4}
\sum_{j=1}^{p}|(e_j t_j)(w)|\le \|Lk_w\|_\mathcal{H}\|h\|_\mathcal{M}\frac{1}{\sqrt{1-|\phi(w)|^2}},    
\end{equation}
 and then use them to establish that $\sum_{i \in I}g_{i}f_{i}$ and $\phi\sum_{j=1}^{p}e_{j}t_{j}$ are both analytic on $\mathbb{D}$. Lastly, using 
 Equation (\ref{MT-def1}), we conclude that 
$h-\sum_{i \in I}g_{i}f_{i}-\phi\sum_{j=1}^{p}e_{j}t_{j}$ is an analytic function on $\mathbb{D}$ having zero of every order at 0. Hence,  
\begin{equation}\label{MT-def5}
    h=\sum_{i \in I}g_{i}f_{i} +  \phi\sum_{j=1}^{p}e_{j}t_{j} \ \ \text{ on } \ \ \mathbb{D}.
\end{equation}

Note that each $f_i , t_j\in C_\phi(H^2(\bb D))$. Therefore, we have obtained $f=(f_i)_{i\in I}\in H^2(\bb D, \ell^2(I))$ and 
$t=(t_j)_{j=1}^p\in H^2(\bb D,\bb C^p)$ with $f_i, t_j\in C_\phi(H^2(\bb D))$ such that Equation \ref{MT-def5} holds and  
\begin{equation}\label{MR-def6}
 \|f\|^2_2 + \|t\|^2_2=\sum_{i \in I} \|f_i\|^2_2 + \sum_{j=1} ^{p}\|t_j\|^2_2\\
\le \|h\|^2_{\mathcal{M}}.   
\end{equation}

Define 
\begin{equation*}
\begin{split}
\mathcal{N}=\left \{(f,t) \in H^2(\bb D,\ell^2(I)\oplus \bb C^p) : f=(f_i)_{i\in I}, t= (t_j)_{j=1}^p, f_i, t_j\in C_\phi(H^2(\bb D)) \right.\\
\left. {\rm and} \ \exists \ h \in \cl M \text{ such that} \ h=\sum_{i\in I}g_if_i+\phi \sum_{j=1}^p e_jt_j, \ {\rm and \ for \ each} \ i\in I, \right. \\ 
 \left. 1\le j\le p, \ k\ge 0, \ \ \langle{f_i,\phi^k}\rangle=\langle{QR^kh,g_i}\rangle, \langle{t_j,\phi^{k}}\rangle=\langle{LR^{k+1}h,e_j} \rangle \right\}.
\end{split}
\end{equation*}

Clearly $\mathcal{N}$ is a vector subspace of $H^2(\mathbb{D},\ell^2(I)\oplus\mathbb{C}^p),$ and the map $\cl G:\cl N\to \cl M$ given by 
$$
\cl G(f,t)=\sum_{i\in I}g_if_i+\phi \left(\sum_{j=1}^pe_jt_j\right)
$$
is well-defined one-one, onto, and linear. 

Now we will show that $\mathcal{N}$ is invariant under $T^*_\phi$. Let $(f,t)\in \cl N.$ Then, by definition, there exists a $h\in \cl M$ such that 
$$
h=\sum_{i\in I}g_if_i+\phi\sum_{j=1}^pe_jt_j,
$$
and for each $k\ge 0,$ $\langle{QR^kh,g_i}\rangle=\langle{f_i,\phi^k}\rangle$ and $\langle{LR^{k+1}h,e_j}\rangle=\langle{t_j,\phi^k}\rangle$ for every $i\in I, 1\le j\le p.$
Decompose
\begin{eqnarray*}
h&=&Qh + Ph\\
&=&Qh+M_\phi Rh\\
&=&\sum_{i\in I}c_{0i}g_i+\phi(Rh).    
\end{eqnarray*}
Then 
$$
\phi(Rh)=h-\sum_{i\in I}c_{0i}g_i=\phi\left(\sum_{i\in I}g_i\tilde{f_i}\right)+\phi\left(\sum_{j=1}^pe_jt_j\right),
$$
where $f_i-c_{0j}=\phi \tilde{f_i}$. Then
$$
Rh= \sum_{i\in I}g_i\tilde{f_i} + \sum_{j=1}^pe_jt_j.
$$

Then
$$
\sum_{i\in I}g_i\tilde{f_i} + \sum_{j=1}^pe_jt_j=Rh=L(Rh) +(P+Q)(Rh) = \sum_{j=1}^p d_{0j}e_j + (P+Q)(Rh),
$$
since $Rh\in \cl M\oplus \cl F$ and $L(Rh)=\sum_{j=1}^p d_{0j}e_j.$ Therefore, 
$$
\sum_{i\in I}g_i\tilde{f_i} + \phi \left(\sum_{j=1}^pe_j\tilde{t_j}\right)\in \cl M,
$$
where $t_j-d_{0j}=\phi \tilde{t_j}.$ Let $\tilde{f}=(\tilde{f_i})_{i\in I}$ and $\tilde{t}=(\tilde{t_j})_{j=1}^p$ Then, $(\tilde{f},\tilde{t})\in \cl N;$ hence  
$T_\phi^*(f,t)=(T_{\phi}^*f, T_{\phi}^*t)=(\tilde{f},\tilde{t})\in \cl N.$ This establishes that $\cl N$ is $T_\phi ^*$ invariant; hence completes the proof for the case $\cl M\nsubseteq \phi \cl H.$

Lastly, note that $Q=0$ when $\cl M\subseteq \phi\cl H$. Then, the proof for the case $\cl M\subseteq \phi \cl H$ follows simply by repeating the above arguments with $Q=0.$  
\end{proof}


\begin{thebibliography}{99}

\bibitem{ABBH} A. Aleman, A. Baranov, Y. Belov, and H. Hedenmalm, {\em Backward shift and nearly invariant subspaces of Fock-type spaces}, Int. Math. Res. Not. IMRN 
(2022), 7390-7419.

\bibitem{AFR} A. Aleman, N. Feldman, W. Ross, {\em The Hardy space of a slit domain}, Frontiers in Mathematics. Birkh$\ddot{\rm a}$user Verlag, Basel (2009). 

\bibitem{deBR} L. de Branges and J. Rovnyak, {\em Square summable power series}, Holt, Rhinehart and 
Winston, 1966.

\bibitem{CCP} I. Chalendar, N. Chevrot, and J Partington, {\em Nearly invariant subspaces for backwards shifts on vector-valued Hardy spaces}, J. Operator Theory {\bf 63} (2010), 403-415.

\bibitem{CGP} I. Chalendar, E. Gallardo-Gutierrez, and J. Partington, {\em A Beurling Theorem for almost-invariant subspaces of
the shift operator}, J. Operator Theory {\bf 83} (2020), 321-331.

\bibitem{CD} A. Chattopadhyay and S. Das, {\em Study of nearly invariant subspaces with finite defect in Hilbert spaces}, Proc. Indian Acad. Sci.(Math. Sci.) {\bf 132} (2022).

\bibitem{CDP} A. Chattopadhyay, S. Das, and C Pradhan, {\em Almost invariant subspaces of the shift operator on vector-valued Hardy spaces}, Integral Equations 
Operator Theory {\bf 92} (2020), 1-15.

\bibitem{Che} N. Chevrot, {\em Kernel of vector-valued Toeplitz operators}, Integral Equations Operator Theory {\bf 67} (2010), 57-78.

\bibitem{Era} C. Erard, {\em Nearly invariant subspaces related to multiplication operators in Hilbert spaces of analytic
functions}, Integral Equations Operator Theory {\bf 50} (2004), 197–210.

\bibitem{Hya} E. Hayashi, {\em The kernel of a Toeplitz operator}, Integral Equations Operator Theory {\bf 9} (1986), 588-591.

\bibitem{Hitt} D. Hitt, {\em Invariant subspaces of $H^2$ of an annulus}, Pacific J. Math. {\bf 134} (1988), 101-120.


\bibitem{LP} Y. Liang, J. R. Partington, {\em Nearly invariant subspaces for operators in Hilbert spaces}, Complex Anal. Oper. Theory {\bf 15} (2021), 1-17.

\bibitem{Sar} D. Sarason, {\em Nearly invariant subspaces of the backward shift}, Operator Theory: Advances and Applications 
{\bf 35} (1988), 481-493.

\bibitem{Sar1} D. Sarason, {\em Sub-Hardy Hilbert spaces in the unit disk}, Lecture Notes in the Math. Sciences {\bf 10}, 
Wiley, New York, 1994.

\bibitem{Yak} D. Yakubovich, {\em Invariant subspaces of the operator of multiplication by $z$ in the space $E^p$ in a multiply connected domain}, (Russian) Zap. Nauchn. Sem. Leningrad. Otdel. Mat. Inst. Steklov. (LOMI) {\bf 178} (1989), Issled. Line$\breve{\i}$n. Oper. Teorii Funktsi$\breve{\i}$. 18, 166-183, 186–187; translation in J. Soviet Math. {\bf 61} (1992), 2046–2056

\end{thebibliography}
\end{document}